\def\sq{\mathbin{{\strut\rule{1.25ex}{1.25ex}}}}
\newcommand{\beq}{\begin{equation}}
\newcommand{\eeq}{\end{equation}}
\newcommand{\linkCode}{\texttt{\url{https://github.com/urosolia/mixed-observable-LQR}}}
\newcommand{\xk}{\mathbf{x_{k}}}         %Maximum Attractive Set
\newcommand{\okp}{\mathbf{o_{k+1}}}         %Maximum Attractive Set
\newcommand{\ok}{\mathbf{o_{k}}}         %Maximum Attractive Set
\newcommand{\oN}{\mathbf{o_{N}}}         %Maximum Attractive Set
\newcommand{\okm}{\mathbf{o_{k-1}}}         %Maximum Attractive Set
\newcommand{\uk}{\mathbf{u}}         %Maximum Attractive Set
\DeclarePairedDelimiter\floor{\lfloor}{\rfloor}
\newcommand{\ojp}{\mathbf{\bar o_{j(k+1)}}}         %Maximum Attractive Set
\newcommand{\oj}{\mathbf{\bar o_{j(k)}}}         %Maximum Attractive Set
\newcommand{\ojN}{\mathbf{\bar o_{j(N)}}}         %Maximum Attractive Set
\newcommand{\ojm}{\mathbf{\bar o_{j(k-1)}}}         %Maximum Attractive Set
\newcommand{\ak}{\mathbf{a}}         %Maximum Attractive Set
\def\mathcolor#1#{\@mathcolor{#1}}
\def\@mathcolor#1#2#3{%
  \protect\leavevmode
  \begingroup
    \color#1{#2}#3%
  \endgroup
}
\newcounter{algorithmctr}[section]
\renewcommand{\thealgorithmctr}{\thesection.\arabic{algorithmctr}}
{\refstepcounter{algorithmctr}\begin{list}{}{%
\setlength{\rightmargin}{0\linewidth}%
\setlength{\leftmargin}{.05\linewidth}
\setlength{\itemsep}{1pt}
\setlength{\parskip}{0pt}
\setlength{\parsep}{0pt}}%
\rmfamily\small
\item[]{\setlength{\parskip}{0ex}\hrulefill\par%
\nopagebreak{\bfseries\textsf{Algorithm \thealgorithmctr~}}}}%
{{\setlength{\parskip}{-1ex}\nopagebreak\par\hrulefill} \end{list}}
\newtheoremstyle{boldStyle}% name of the style to be used
  {\topsep}% measure of space to leave above the theorem. E.g.: 3pt
  {\topsep}% measure of space to leave below the theorem. E.g.: 3pt
  {\itshape}% name of font to use in the body of the theorem
  {0pt}% measure of space to indent
  {\bfseries}% name of head font
  {.}% punctuation between head and body
  { }% space after theorem head; " " = normal interword space
  {\thmname{#1}\thmnumber{ #2}\thmnote{ (#3)}}
\newtheoremstyle{italicStyle}% name of the style to be used
  {\topsep}% measure of space to leave above the theorem. E.g.: 3pt
  {\topsep}% measure of space to leave below the theorem. E.g.: 3pt
  {}% name of font to use in the body of the theorem
  {0pt}% measure of space to indent
  {\bfseries}% name of head font
  {.}% punctuation between head and body
  { }% space after theorem head; " " = normal interword space
  {\thmname{#1}\thmnumber{ #2}\thmnote{ (#3)}}
\theoremstyle{boldStyle}
\newtheorem{theorem}{Theorem}
\newtheorem{lemma}{Lemma}
\newtheorem{proposition}{Proposition}
\newtheorem{corollary}{Corollary}
\newcommand{\Zsp}{\mathbb{Z}_{0+}}
\newcommand{\Rsp}{\mathbb{R}_{+}}
\newcommand{\Rp}{\mathbb{R}_{0+}}
\newcommand{\Prob}{\mathbb{P}}
\theoremstyle{italicStyle}
\newtheorem{assumption}{Assumption}
\renewenvironment{proof}{{\textbf{Proof:}}}{\hfill$\sq$}
\newcommand{\fixed@sra}{$\vrule height 2\fontdimen22\textfont2 width 0pt\shortrightarrow$}
\newcommand{\shortarrow}[1]{%
  \mathrel{\text{\rotatebox[origin=c]{\numexpr#1*45}{\fixed@sra}}}
}
\title{\LARGE \bf 
The Mixed-Observable Constrained Linear Quadratic 
Regulator Problem: the Exact Solution and Practical Algorithms}
\author{ Ugo Rosolia, Yuxiao Chen, Shreyansh Daftry, Masahiro Ono, Yisong Yue, and Aaron D. Ames  % <-this % stops a space
\thanks{Ugo Rosolia is with Amazon, 22 Rue Edward Steichen, 2540 Luxembourg, Yuxiao Chen, Yisong Yue, and Aaron D. Ames are at the California Institute of Technology, Pasadena, USA. E-mails: {\tt\scriptsize{\{urosolia\}@amazon.lu}}, {\tt\scriptsize{\{chenyx, yue, ames\}@caltech.edu}}.
Shreyansh Daftry and Masahiro Ono are with the Jet Propulsion Laboratory, California Institute of Technology, Pasadena, USA. E-mails: {\tt\scriptsize{\{daftry, masahiro.ono\}@jpl.nasa.gov}}. This research was carried out 
at the Jet Propulsion Laboratory and the California Institute of Technology 
under a contract with the National Aeronautics and Space Administration.
The authors like to acknowledge also the support of the NSF award \#1932091
.}
}%
\begin{document}

\maketitle
% \thispagestyle{empty}
% \pagestyle{empty}
%%%%%%%%%%%%%%%%%%%%%%%%%%%%%%%%%%%%%%%%%%%%%%%%%%%%%%%%%%%%%%%%%%%%%%%%%%%%%%%%
\begin{abstract}
This paper studies the problem of steering a linear system subject to state and input constraints towards a goal location that may be inferred only through noisy partial observations. We assume mixed-observable settings, where the system's state is fully observable and the environment's state defining the goal location is only partially observed. In these settings, the planning problem is an infinite-dimensional optimization problem where the objective is to minimize the expected cost. We show how to reformulate the control problem as a finite-dimensional deterministic problem by optimizing over a trajectory tree. Leveraging this result, we demonstrate that when the environment is static, the observation model piecewise, and cost function convex, the original control problem can be reformulated as a Mixed-Integer Convex Program (MICP) that can be solved to global optimality using a branch-and-bound algorithm. The effectiveness of the proposed approach is demonstrated on navigation tasks, where the goal location should be inferred through noisy measurements.
\end{abstract}

\begin{IEEEkeywords}
Optimal control, observability, measurement uncertainty.
\end{IEEEkeywords}

%%%%%%%%%%%%%%%%%%%%%%%%%%%%%%%%%%%%%%%%%%%%%%%%%%%%%%%%%%%%%%%%%%%%%%%%%%%%%%%%
\section{Introduction}
% Advances in optimization theory and the availability of open-source optimization solvers has revolutionized the field of control theory over the past three decades. Modern control algorithms leverage this technology to compute control actions based on sensors measurements~\cite{garcia1989model, morari1999model, allgower2004nonlinear, mayne2000survey, borrelli2017predictive, ames2016control, ames2019control}. In these strategies, controller to constantly adapt and re-plan when disturbances and unexpected events occur, i.e., feedback is naturally incorporated into the system.

Model Predictive Control (MPC) is a mature control technology that in part owns its popularity to developments in optimization solvers~\cite{morari1999model, allgower2004nonlinear, mayne2000survey, borrelli2017predictive, wang2009fast, lopez2013fast, jerez2014embedded, kouzoupis2018recent, bemporad2018numerically, gros2020linear}. In MPC, at each time step an optimal planned trajectory is computed solving a finite-dimensional optimization problem, where the cost function and constraints encode the control objectives and safety requirements, respectively. Then, the first optimal control action is applied to the system and the process is repeated at the next time step based on the new measurement. 
This control methodology is ubiquitous in industry, with applications ranging from autonomous driving~\cite{falcone2007predictive, hrovat2012development, lima2018experimental} to large scale power systems~\cite{bengea2012model, serale2018model, maddalena2020data}.

For deterministic discrete-time systems, an optimal trajectory represented by a sequence of states and control actions can be computed leveraging a predictive model of the system. On the other hand, when uncertainties are acting on the system and/or only partial state observations are available, it is not possible to plan an optimal trajectory for the closed-loop system, as its future evolution is uncertain. In these cases, the controller should plan the evolution of the system taking into account that in the future new measurements will be available. More formally, the controller should plan the evolution of the system using a policy that is a function mapping the system's state to a control action. Unfortunately planning over policies is computationally intractable, even for the constrained linear quadratic regulator problem when additive disturbances affect the system's dynamics~\cite{scokaert1998min}. 

Several strategies have been presented in the literature to ease the computational burden of planning over policies~\cite{ goulart2006optimization,  wang2019system, chisci2001systems, mayne2005robust,  yu2013tube, fleming2014robust, liniger2017racing, ben2004adjustable}. When the system dynamics are affected by disturbances and the system's state can be perfectly measured, the planning problem can be simplified by computing affine disturbance feedback policies that map disturbances to control actions~\cite{ben2004adjustable, goulart2006optimization, wang2019system}. 
%This strategy was originally presented in the operational research community~\cite{ben2004adjustable} and it was analysed in an MPC framework in~\cite{goulart2006optimization}, where the authors showed that there is a one-to-one mapping between affine disturbance feedback and affine state feedback policies. The idea of considering policies that map disturbances to actions was leveraged also in the system-level synthesis approach presented in~\cite{wang2019system}. 
Another class of feedback policies is considered in tube MPC strategies~\cite{chisci2001systems, mayne2005robust, yu2013tube, fleming2014robust}, where the control actions are computed based on a predefined feedback term and a feed-forward component that is computed online by solving an optimization problem. 
Similar strategies may be used in partially observable settings~\cite{mayne2006robust, alvarado2007output, cannon2012stochastic}. 
%In this case, the system's state is not known exactly and estimation errors should be considered when computing control actions.

The above mentioned strategies are designed for uni-modal disturbances and measurement noise. However, in several practical engineering applications uncertainties are multi-modal, and it is required to design controllers that take the structure of the uncertainty into account to reduce conservatism. For instance, in autonomous driving a controller should plan a trajectory taking into account that surrounding vehicles and pedestrians may exhibit different behaviors that can be categorized into modes, e.g., merging or lane keeping for a car, and crossing or not crossing for a pedestrian~\cite{svensson2018safe, batkovic2020robust, yuxiao2022branch, nair2021stochastic}.
Planning over a trajectory tree, where each branch is associated with different uncertainty modes, is a standard strategy that has been leveraged in the literature to synthesize a controller that can handle multi-model disturbances~\cite{sopasakis2019risk, nair2021stochastic, alsterda2021contingency, alsterda2019contingency, batkovic2020robust, yuxiao2022branch}, when perfect state feedback is available. It is worth mentioning that also adaptive dynamic programming strategies can be used to design controllers for uncertain systems when perfect state feedback is available~\cite{liu2020adaptive, wang2009adaptive, murray2002adaptive}.

In this work, we introduce the mixed-observable constrained linear quadratic regular problem, where perfect state feedback is not available for a subset of the state space. Compared to the standard LQR problem, in our formulation we consider state and input constraints and, most importantly, we assume that only noisy environment measurements about the goal location are available. Thus, the controller has to compute actions also to collect informative measurements. This problem arises in navigation tasks, where a robot has to find an object that could be in a finite number of candidate locations, and the exact one has to be inferred through noisy measurements.
We assume that the system's state is fully observable and we model the partially observable environment state, which represents the goal location, using a hidden Markov model (HMM)~\cite{krishnamurthy2016partially}. The HMM is constructed based on the system and the environment states and it allows us to characterize the observation model by describing the sensors' accuracy. We consider discrete time  systems and environments with continuous and discrete state spaces, respectively.
For this reason, our approach generalizes the strategy from~\cite{ong2009pomdps}, where the authors introduced the mixed-observable control problem for discrete time systems with discrete state spaces.

Our contribution is twofold. First, we show how to reformulate the optimal control problem as a deterministic finite-dimensional optimization problem over a trajectory tree. The computational cost of solving this finite-dimensional optimal control problem increases exponentially with the horizon length, thus we introduce an approximation that can be used to compute a feasible solution to the original problem.
Then, leveraging these results, we demonstrate that through a nonlinear change of coordinates the original optimal control problem can be approximated by solving a Mixed-Integer Convex Program (MICP), when the environment is static and the observation model is piecewise. As a corollary, we show that when the observation model is constant the value function associated with the optimal control problem is convex. Finally, we test the proposed strategy on two navigation examples. %where the objective is to steer the system to a goal location that can be inferred only through noisy partial observations.
%%%%%%%%%%%%%%%%%%%%%%%%%%%%%%%%%%%%%%%%%%%%%%%%%%%%%%%%%%%%%%%%%%%%%%%%%%%%%%%%

% The rest of this paper is organized as follows: Section~\ref{sec:problemFormulation} describes the problem formulation.
% The deterministic reformulation is presented in Section~\ref{sec:exact}.
% Section~\ref{sec:MICP} shows that the problem can be recast as an MICP.
% Finally, the proposed approach is tested on several navigation tasks in Section~\ref{sec:examples}.

\textit{Notation: } For a vector $b \in \mathbb{R}^n$ and an integer $s \in \{1,\ldots, n\}$, we denote $b[s]$ as the $s$-th component of the vector $b$, $b^\top$ indicates its transpose, $M = \text{diag}(b) \in \mathbb{R}^{n \times n}$ is a diagonal matrix with diagonal elements $M[s, s] = b[s]$, and $v= 1/b$ is defined as a vector $v \in \mathbb{R}^n$ with entries $v[s] = 1/b[s]$ for all $s \in \{1,\ldots, n\}$. For a function $T : \mathbb{R}^n \rightarrow \mathbb{R}$, $T(b)$ denotes the value of the function $T$ at $b$. Throughout the paper, we will use capital letters to indicate functions and lower letters to indicate vectors. 
The set of positive integers is denoted as $\Zsp = \{1, 2, \ldots\}$, and the set of (strictly) positive reals as ($\Rsp = (0, \infty)$) $\Rp = [0, \infty)$. Furthermore, given a set $\mathcal{Z}$ and an integer $k$, we denote the $k$-th Cartesian product as $\mathcal{Z}^k=\mathcal{Z}\times\ldots\times\mathcal{Z}$ and $|\mathcal{Z}|$ as the cardinality of $\mathcal{Z}$. Finally, given a real number $a\in\mathbb{R}$ we define the floor function $\floor{a}$, which outputs the largest integer $i = \floor{a}$ such that $i \leq a$. 

\section{Problem Formulation}\label{sec:problemFormulation}
%This section describes the problem formulation. First, we introduce the system's dynamics and the discrete model describing the partial observable environment. Afterwards, we introduce the control problem under study.

% \begin{figure}[t!]
%     \centering
% 	\includegraphics[width= 1.0\columnwidth]{}
%     \caption{In the above example, the system has to reach a partially observable goal location that is a function of the partially observable environment state~$e$. Notice the closed-loop trajectory is a function of the observation collected at time step $t=1$.}\label{fig:example}
% \end{figure}

\subsection{System and Environment Models}\label{sec:envSysModel}
We consider the following linear time-invariant system:
\begin{equation}\label{eq:sys}
    x_{k+1} = A x_k + B u_k,
\end{equation}
where the state $x_k \in \mathbb{R}^n$, the input $u_k \in \mathbb{R}^d$, and $k$ indexes over discrete time steps. Furthermore, the above system is subject to the following state and input constraints: 
\begin{equation}\label{eq:cnstr}
     u_k \in \mathcal{U} \subseteq \mathbb{R}^d\text{ and } x_k \in \mathcal{X}\subseteq \mathbb{R}^n, \forall k\geq 0.
\end{equation}
% where the state constraint set $\mathcal{X}\subseteq \mathbb{R}^n$ and the input constraint set $\mathcal{U}\subseteq \mathbb{R}^d$.

Our goal is to control system~\eqref{eq:sys} in environments represented by partially observable discrete states. In particular, the environment evolution is modeled using a hidden Markov model (HMM) given by the tuple $\mathcal{H} = \left( \mathcal{E}, \mathcal{O},T, Z \right)$, where: 
\begin{itemize}

	\item $\mathcal{E}=\{1,\ldots,|\mathcal{E}|\}$ is a set of partially observable environment states;
    
	\item $\mathcal{O}=\{1, \ldots,|\mathcal{O}|\}$ is the set of observations.% for the partially observable state $e\in \mathcal{E}$;
			
    \item The function $T: \mathcal{E} \times \mathcal{E} \times \mathbb{R}^n \rightarrow [0,1]$ describes the probability of transitioning to a state $e'$ given the current environment state $e$ and system's state $x$, i.e., $T(e', e, x) :=\mathbb{P}(e'| e, x)$. 

	\item The function $Z: \mathcal{E} \times \mathcal{O}\times \mathbb{R}^n \rightarrow [0,1]$ describes the probability of observing $o$, given the environment state $e$ and the system's state $x$, i.e., $Z(e,o,x) :=  \Prob(o| e, x)$.

\end{itemize}

As the environment state $e_k$ is partially observable, we introduce the following belief vector:
\begin{equation*}
    b_k \in \mathcal{B} = \{b \in \mathbb{R}^{|\mathcal{E}|}_{0+} : \sum_{e=1}^{|\mathcal{E}|} b[e] = 1\}.
\end{equation*}
The belief $b_k$ is a sufficient statistics and each entry $b_k[e]$ represents the posterior probability that the state of the environment $e_k$ equals $e\in\mathcal{E}$, given the observation vector $\ok = [o_1, \ldots,o_k]$, the system's trajectory $\xk = [x_1, \ldots, x_k]$, the state $x(0)$, and the belief vector $b(0)$ at time $t=0$, i.e., $b_k[e] = \Prob(e|\ok, \xk, x(0), b(0))$.
% \begin{equation*}
%     b_k[e] = \Prob(e|\ok, \xk, x(0), b(0)).
% \end{equation*}

Consider an example where a Mars rover has to find a science sample that may be in one of several locations, which are identified using coarse and low resolution surface images~\cite{Daftry2022MLnav, rosoliaMOMDP2021}. As the exact location is unknown, the rover is required to collect measurements to identify the science sample's location.
% \footnote{In this example, the environment is dynamic if the location of the sample may change during the control task and static otherwise.}. 
In this setting, the environment could be represented by an HMM where the set of environment states $\mathcal{E}$ collects the possible science sample locations, e.g., $\mathcal{E}= \{\tt{loc}_1, \ldots, \tt{loc}_n \}$ and $e = \tt{loc}_i$ if the science sample is in the $i$-th location. In the next section, we further formalize this navigation task as a regulation problem.

\subsection{Control Objectives}
Given the environment's belief $b(t)$ and system's state $x(t)$, our goal is solve the following \textit{finite time optimal control problem (FTOCP)}:
\begin{equation}\label{eq:probGoal}
\begin{aligned}
J(x(t), b(t)) ~&\\= \min_{\boldsymbol{\pi}} \quad & \mathbb{E}_{\mathbf{o_{N-1}}}\bigg[\sum_{k=0}^{N-1} h(x_k, u_k, e_k) + h_N(x_N, e_N)\bigg| b(t)\bigg] \\
    \text{subject to} \quad &    x_{k+1} = A x_k + B u_k,\\
    & u_k = \pi_k(\ok, \xk, x(t), b(t)), \\
    & x_0 = x(t), \\
    & u_k \in \mathcal{U}, x_k \in \mathcal{X},\forall k\in\{0,\ldots,N-1\},
\end{aligned}
\end{equation}
where the stage cost $h : \mathbb{R}^n \times \mathbb{R}^d \times \mathcal{E} \rightarrow \mathbb{R}$ and the terminal cost $h_N : \mathbb{R}^n \times \mathcal{E} \rightarrow \mathbb{R}$. Note that the objective is a function of the partially observable environment states $e_k \in \mathcal{E}$, and the expectation is over the environment observations $\mathbf{o_{N-1}} = [o_1,\ldots, o_{N-1}]$, which are stochastic as discussed in Section~\ref{sec:envSysModel}. In the above FTOCP, the optimization is carried out over the sequence of control policies $\boldsymbol{\pi} = [\pi_0, \ldots, \pi_{N-1}]$, and at each time $k$ the policy $\pi_k : \mathcal{O}^{k} \times \mathcal{X}^{k+1} \times  \mathcal{B}\rightarrow \mathbb{R}^d$ maps the environment observations up to time $k$, the system's trajectory, and the initial belief $b(t)$ to the control action $u_k$. Notice that we focus on the solution to the above finite-time control task and we do not analyze the stability properties of the closed-loop system.
% An example of a control task that can be modeled using the FTOCP~\eqref{eq:probGoal} is shown in Figure~\ref{fig:example}, where the system has to reach a goal location that may be inferred only through partial environment observations.

Computing the optimal solution to the FTOCP~\eqref{eq:probGoal} is challenging as $i)$ the environment's state is partially observable, $ii)$ our goal is to minimize the expected cost, and $iii)$ the optimization is infinite dimensional as it is carried out over the space of feedback policies, which are functions mapping states and belief vectors to inputs. 
In what follows, we show that the FTOCP~\eqref{eq:probGoal} can be reformulated as a finite-dimensional non-linear program (NLP). Leveraging the discrete nature of the set of observations $\mathcal{O}$, we will show that optimizing over feedback policies is equivalent to optimizing over a tree of control actions. Furthermore, we show that when the environment is static, the cost functions $h(\cdot, \cdot, e)$ and $h_N(\cdot, e)$ are convex and quadratic, and the observation function $Z(e,o, \cdot):\mathbb{R}^n \rightarrow [0,1]$ is piecewise for all $e \in \mathcal{E}$ and $o \in \mathcal{O}$, then the FTOCP~\eqref{eq:probGoal} can be recast as an MICP. Finally, we show that when the observation model is constant the FTOCP~\eqref{eq:probGoal} can be written as a convex parametric optimization problem.

\section{The Exact Solution}\label{sec:exact}
%This section shows how to reformulate the FTOCP~\eqref{eq:probGoal} as a finite-dimensional optimization problem. 
%Then, we propose a practical approach that can be used to reduce the computational burden.

\subsection{Cost Reformulation}
As discussed in Section~\ref{sec:envSysModel}, the belief $b_k$ is a sufficient statistics for an HMM~\cite{krishnamurthy2016partially}. Therefore, at each time $k$ the belief can be computed using the observation $o_k$, the system's state $x_k$, and the belief at the previous time step $b_{k-1}$, i.e., 
\begin{equation}\label{eq:beliefUpdateComponent}
\begin{aligned}
    b_k[e] 
    &= \frac{Z(e, o_k, x_k)}{\Prob(o_k |x_k,  b_{k-1})}\sum_{i\in \mathcal{E}} T(e,i, x_k) b_{k-1}[i].
\end{aligned}
\end{equation}
For further details about the belief update equation please refer to~\cite{ong2009pomdps, rosoliaMOMDP2021}. 
The above equation can be written in compact form:
\begin{equation*}
\begin{aligned}
    b_k = \frac{A_e(o_k, x_k) b_{k-1} }{\Prob(o_k |x_k,  b_{k-1})},
\end{aligned}
\end{equation*}
where $\Prob(o_k |x_k,  b_{k-1})$ is a normalization constant and the matrix $A_e(o_k, x_k) \in \mathbb{R}^{|\mathcal{E}| \times |\mathcal{E}|}$, which is a function of the observations $o_k$ and the system's state $x_k$ at time $k$, is defined as follows:
\begin{equation}\label{eq:Amatrix}
    A_e(o_k, x_k) = \Theta(o_k, x_k) \Omega(x_k),
\end{equation}
where 
\begin{equation}\label{eq:omega_def}
    \Omega(x_k) = \begin{bmatrix} T(1,1, x_k) & \ldots & T(1,|\mathcal{E}|, x_k)\\ T(2,1, x_k) & \ldots & T(2,|\mathcal{E}|, x_k)\\
    \vdots & & \vdots \\    
    T(|\mathcal{E}|,1,x_k) & \ldots & T(|\mathcal{E}|,|\mathcal{E}|, x_k)
    \end{bmatrix}
\end{equation}
and
\begin{equation*}
    \Theta(o_k, x_k) = \text{diag}\Big(\begin{bmatrix} Z(1, o_k, x_k) & \ldots & Z(|\mathcal{E}|, o_k, x_k)
    \end{bmatrix}\Big).
\end{equation*}

Leveraging the above definitions, we show that the expected cost from problem~\eqref{eq:probGoal} can be rewritten as a summation over the set of possible observations $\mathcal{O}$.

\begin{proposition}
Consider the optimal control problem~\eqref{eq:probGoal}. The expected cost can be equivalently written as 
\begin{equation}\label{eq:costRef}
   \begin{aligned}
       \mathbb{E}_{\mathbf{o_{N-1}}}\Bigg[ \sum_{k=0}^{N-1} &h(x_k, u_k, e_k) + h_N(x_N, e_N)\Bigg| b_0 \Bigg] \\
       &=  \sum_{k=0}^{N-1} \sum_{\ok \in \mathcal{O}^k} \sum_{e \in \mathcal{E}} v^{\ok}_k[e] h(x_k, u_k, e)\\
       &\quad\quad\quad+\sum_{\oN \in \mathcal{O}^N} \sum_{e \in \mathcal{E}} v^{\oN}_N[e] h_N(x_N, e),
   \end{aligned}
\end{equation}
where the unnormalized belief $v^{\ok}_k = A_e(o_k, x_k) v^{\okm}_{k-1}$ and the matrix $A_e(o_k, x_k) \in \mathbb{R}^{|\mathcal{E}| \times |\mathcal{E}|}$ is defined in~\eqref{eq:Amatrix}.
\end{proposition}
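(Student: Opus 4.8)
The plan is to expand the expectation over the full observation sequence $\mathbf{o_{N-1}} = [o_1,\ldots,o_{N-1}]$ by conditioning, and then to reorganize the resulting sum so that the $k$-th stage cost is weighted only by the marginal (unnormalized) belief over observations up to time $k$. First I would write the expectation explicitly as a sum over all observation sequences, $\mathbb{E}_{\mathbf{o_{N-1}}}[\,\cdot\,|b_0] = \sum_{\mathbf{o_{N-1}}\in\mathcal{O}^{N-1}} \Prob(\mathbf{o_{N-1}}|b_0,x(t))\,[\,\cdot\,]$, and observe that by the Markov structure of the HMM the joint observation probability factors along the trajectory. The key identity I would establish (by induction on $k$, using the belief update~\eqref{eq:beliefUpdateComponent} and the definition $A_e(o_k,x_k)=\Theta(o_k,x_k)\Omega(x_k)$) is that the unnormalized belief satisfies
\[
    v^{\ok}_k[e] = \Prob(o_1,\ldots,o_k, e_k=e \mid x_0,\ldots,x_k, b_0),
\]
i.e.\ $\sum_{e} v^{\ok}_k[e] = \Prob(\ok|x_{1:k},b_0)$ is exactly the probability of the observation prefix, and $v^{\ok}_k = A_e(o_k,x_k)v^{\okm}_{k-1}$. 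This is immediate from the recursion because the normalization constant $\Prob(o_k|x_k,b_{k-1})$ that appears in~\eqref{eq:beliefUpdateComponent} telescopes away once we drop the normalization and carry the products forward.

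Given this identity, the stage term $\mathbb{E}[h(x_k,u_k,e_k)|b_0]$ becomes $\sum_{\mathbf{o_{N-1}}} \Prob(\mathbf{o_{N-1}}|\cdot)\sum_e \Prob(e_k=e|\mathbf{o_{k}},\cdot)\,h(x_k,u_k,e)$; summing out the future observations $o_{k+1},\ldots,o_{N-1}$ (whose probabilities sum to one conditioned on the prefix) collapses this to $\sum_{\ok\in\mathcal{O}^k}\sum_e v^{\ok}_k[e]\,h(x_k,u_k,e)$. The crucial point making this legitimate is that in the trajectory-tree reformulation $x_k$ and $u_k$ depend only on $\ok$ (the observations realized up to time $k$), so they are constant over the marginalization of later observations — this is where the paper's earlier claim that "optimizing over feedback policies is equivalent to optimizing over a tree of control actions" is used. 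Applying the same argument to the terminal term with $k=N$ gives the $\sum_{\oN}\sum_e v^{\oN}_N[e]\,h_N(x_N,e)$ piece, and summing over $k=0,\ldots,N-1$ yields~\eqref{eq:costRef}.

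The main obstacle I anticipate is bookkeeping the conditioning correctly: one must be careful that the observation at time $k$ genuinely depends on $x_k$ (hence the tree indexing is consistent), that the "unnormalized belief" is indexed by the prefix $\ok$ rather than the full sequence, and that the telescoping of normalization constants is stated cleanly rather than hand-waved. A secondary subtlety is justifying the interchange of the finite sums and the marginalization step — this is routine since all sums are finite ($\mathcal{O}$ and $\mathcal{E}$ are finite sets), but it should be stated. Once the identity $\sum_e v^{\ok}_k[e] = \Prob(\ok|\cdot)$ and $v^{\ok}_k = A_e(o_k,x_k)v^{\okm}_{k-1}$ is nailed down by induction, the rest is a direct regrouping of a finite sum.
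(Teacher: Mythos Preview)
Your proposal is correct and follows essentially the same approach as the paper: both arguments hinge on recognizing that the unnormalized belief satisfies $v^{\ok}_k[e] = b_k[e]\,\Prob(\ok\mid \xk,x_0,b_0)$ via the telescoping of normalization constants in~\eqref{eq:beliefUpdateComponent}, so that the stage cost at time $k$ collapses to $\sum_{\ok}\sum_e v^{\ok}_k[e]\,h(x_k,u_k,e)$. The only cosmetic difference is that the paper conditions directly on the prefix $\ok$ (invoking the law of total expectation and the determinism of the dynamics) rather than first expanding over the full sequence $\mathbf{o_{N-1}}$ and then marginalizing out the future observations as you do.
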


\begin{proof}
First, we notice that, as the system dynamics are deterministic, the expected stage cost at time step $k$ can be written as:
\begin{equation}\label{eq:costDerivation}
    \begin{aligned}
     &\mathbb{E}_{\mathbf{o_{N-1}}}[  h(x_k, u_k, e_k) | \xk, x_0, b_0 ]  \\
    &=\sum_{\ok \in \mathcal{O}^k} \mathbb{E}_{\mathbf{o_{N-1}}} [ h(x_k, u_k, e_k) | \xk, x_0, b_0, \ok ] \Prob(\ok|\xk, x_0, b_0) \\
    &=\sum_{\ok \in \mathcal{O}^k} \sum_{e \in \mathcal{E}} b_k[e] h(x_k, u_k, e) \Prob(\ok|\xk, x_0, b_0) \\
            & = \sum_{\ok \in \mathcal{O}^k} \sum_{e \in \mathcal{E}} v^{\ok}_k[e] h(x_k, u_k, e).
    \end{aligned}
\end{equation}
In the above derivation we leveraged the independence of the observations collected at each time step, i.e., $\Prob(\ok|\xk, x_0, b_0)=\Prob(o_1|x_1, x_0, b_0)\times\ldots\times\Prob(o_k|\xk, x_0, b_0)$, and we defined
\begin{equation*}
     v^{\ok}_k[e] = {Z(e, o_k, x_k)}\sum_{i\in\mathcal{E}} T(e,i, x_k) v^{\okm}_{k-1}[i],
\end{equation*}
which can be written in compact form as 
% \begin{equation*}
    $v^{\ok}_k = A_e(o_k, x_k) v^{\okm}_{k-1}$.
% \end{equation*}
Finally, we notice that the derivation in~\eqref{eq:costDerivation} holds also for the terminal cost function $h_N$. Therefore,
we have that the desired result follows from~\eqref{eq:costDerivation} and the linearity of the expectation in equation~\eqref{eq:costRef}.
\end{proof}

\begin{figure}[b!]
    \centering
	\includegraphics[width= 1.0\columnwidth]{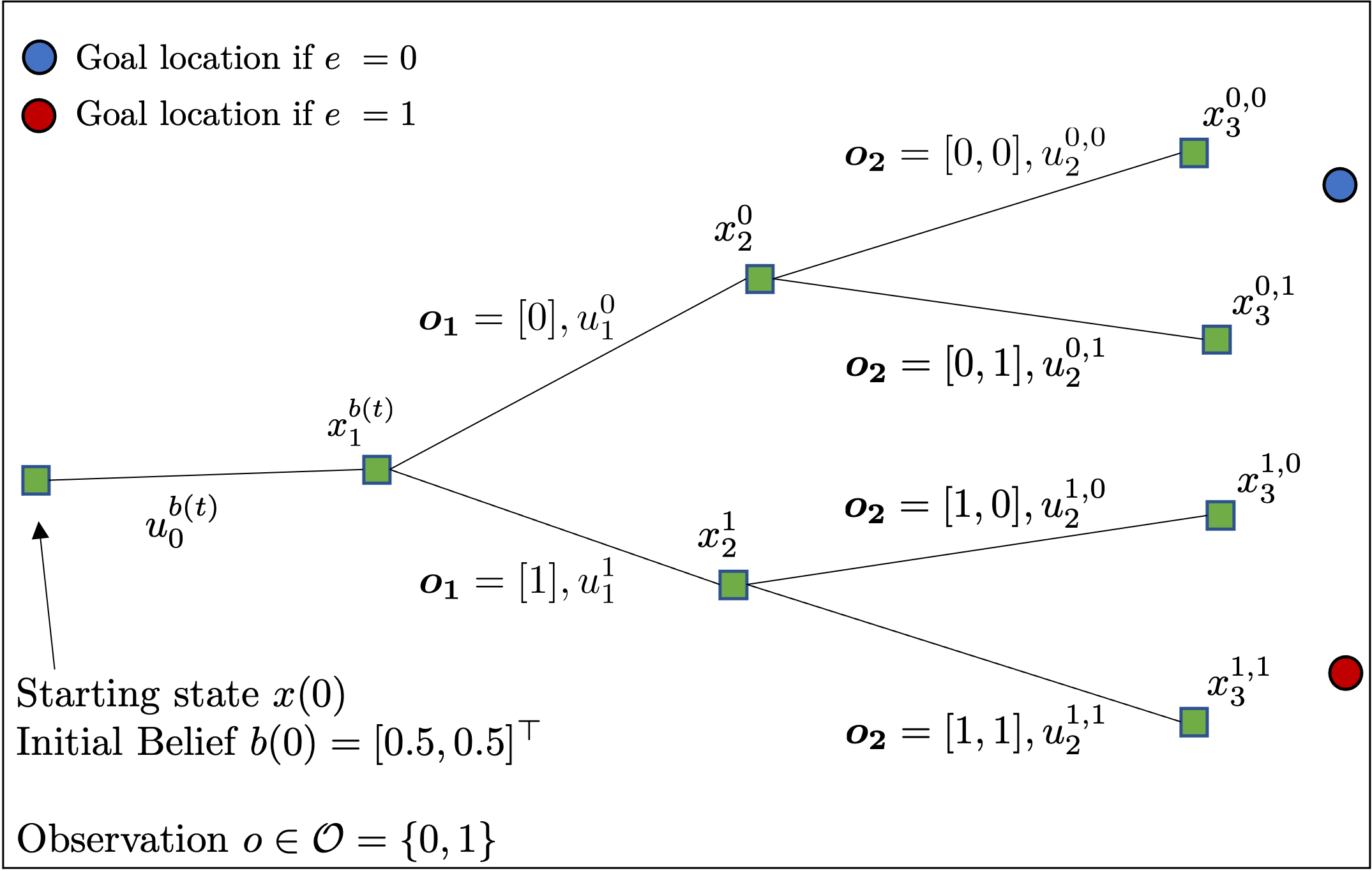}
    \vspace{-0.5cm}
    \caption{Tree of trajectories for $N=3$, where at each time $k$ there are $|\mathcal{O}|=2$ possible observations. 
    Each predicted control action $u_k^{\ok}$ is associated with an observation vector $\ok \in \mathcal{O}^k$. Thus, the above tree encodes a policy given by the actions that the controller would apply in the future depending on the observations collected up to time $k$.
    %Each node represents the system's predicted state $x_k^{\ok}$ and each edge the control action $u_k^{\ok}$, which is applied when the observation vector $\ok$ is measured.
    }\label{fig:tree}
\end{figure}

\subsection{Deterministic Reformulation}
In the previous section, we showed how to leverage the beliefs associated with all possible observations to express the expectation as a summation. In this section, we show that the optimization carried out over feedback policies can be reformulated as an optimization over a tree of trajectories, as the one shown in Figure~\ref{fig:tree}.

The control policy $\pi_k : \mathcal{O}^{k} \times \mathcal{X}^{k+1} \times  \mathcal{B}$ from~\eqref{eq:probGoal} maps the vector of observations $\ok =[o_1, \ldots, o_k] \in \mathcal{O}^{k}$, the system's trajectory, and the initial belief $b_0=b(0)$ to the control action $u_k$, i.e., $ u_k = \pi(\ok, \xk, x_0, b_0)$.
% \begin{equation*}
%     u_k = \pi(\ok, \xk, x_0, b_0).
% \end{equation*}
Notice that the system dynamics from problem~\eqref{eq:probGoal} are deterministic and therefore, given an initial condition $x(t)$ and an initial belief $b(t)$, the control action at time $k$ is a function only of the observation vector $\ok$. Thus, we define the control action $u_k^{\ok} \in \mathbb{R}^d$ associated with the observation vector $\ok \in \mathcal{O}^k$, and we reformulate problem~\eqref{eq:probGoal} as an optimization problem over the set of control actions $\{ u_k^{\ok} \in \mathbb{R}^d : k \in \{0, \ldots, N-1\},  \ok \in \mathcal{O}^k\}$. This strategy allows us to optimize over policies as at time $k$ the controller plans $|O|^k$ distinct actions associated with each uncertain sequence of observations  $\ok = [o_1, \ldots, o_k] \in \mathcal{O}^k$. Basically,  the controller optimizes over a tree of control actions,  as shown in Figure~\ref{fig:tree}.
More formally, given the environment's belief $b(t)$ and the system's state $x(t)$, we rewrite problem~\eqref{eq:probGoal} as:

% \begin{figure}[t!]
%     \centering
% 	\includegraphics[width= 1.0\columnwidth]{}
% 	\vspace{-0.7cm}
%     \caption{Tree of trajectories for $N=6$, where every $N_b=2$ time steps $|\mathcal{O}|$ possible observations may be collected. Each node represents predicted state $s_k^{\oj}$ of the system and each edge the control action $a_k^{\oj}$, which is applied when the observation vector $\oj$ is measured.}\label{fig:subBranch}
% \end{figure}

\begin{equation}\label{eq:probDet}
\begin{aligned}
J(x(t), b(t)) = \min_{\uk} \quad & \sum_{k=0}^{N-1} \sum_{\ok \in \mathcal{O}^k} \sum_{e \in \mathcal{E}} v^{\ok}_k[e] h(x_k^{\ok}, u^{\ok}_k, e) \\
&\quad\quad + \sum_{\oN \in \mathcal{O}^N} \sum_{e \in \mathcal{E}} v^{\oN}_N[e] h_N(x_N^{\oN}, e)\\
    \text{subject to} \quad~ &    x_{k+1}^{\ok} = A x_k^{\okm} + B u_k^{\ok},\\
    & x^{\mathbf{o_{-1}}}_0 = x(t), v^{\mathbf{o_0}}_0 = b(t), \\
    & v^{\okp}_{k+1} = A_e(o_{k+1}, x_{k+1}^{\ok}) v^{\ok}_{k}, \\
    & u_k^{\ok} \in \mathcal{U}, x_{k+1}^{\ok} \in \mathcal{X},\\
    & \forall {\ok} \in \mathcal{O}^k, \forall k\in\{0,\ldots,N-1\},
\end{aligned}
\end{equation}
% \begin{equation}\label{eq:probDet}
% \begin{aligned}
% J(x(t), b(t)) = \min_{\uk} \quad & \sum_{k=0}^{N-1} \sum_{\ok \in \mathcal{O}^k} \sum_{e \in \mathcal{E}} v^{\ok}_k[e] h(x_k^{\ok}, u^{\ok}_k, e) \\
%     \text{s.t.} \quad~ &    x_{1}^{\mathbf{o}_0} = A x(t) + B u_0,\\&    x_{k+1}^{\ok} = A x_k^{\okm} + B u_k^{\ok},\\
%     & x^{\mathbf{o_{-1}}}_0 = x(t), v^{\mathbf{o_0}}_0 = b(t), \\
%     & v^{\ok}_k = A_e(o_k, x_k) v^{\okm}_{k-1} \\
%     & u_0 \in \mathcal{U}, u_k^{\ok} \in \mathcal{U}, x_{k+1}^{\ok} \in \mathcal{X},\\
%     & \forall {\ok} \in \mathcal{O}^k, \forall k\in\{1,\ldots,N-1\},
% \end{aligned}
% \end{equation}
where the vector of observations $\ok = [{{o_1}}, \ldots, {{o_k}}]$ for all $k \in \{1, \ldots, N-1\}$, and at time $k=0$ we defined $\mathbf{o}_0 = \mathbf{o}_{-1} = b(t)$, and $\mathcal{O}^0 = b(t)$.
In the above problem, the matrix of decision variables is defined as:
\begin{equation}\label{eq:varProbDet}
    \uk = [u_0^{\mathbf{o_0}}, \ldots, u_{N-1}^{\mathbf{o_{N-1}}}] \in \mathbb{R}^{d \times \sum_{k=0}^{N-1}|\mathcal{O}|^k}.
\end{equation}
Note that, for each time step $k \in \{0, \ldots, N-1\}$, the above matrix collects the $|\mathcal{O}|^k$ control actions associated with all observation vectors from the set $\mathcal{O}^k$.

\begin{lemma}\label{lemma:sol}
Assume that $\mathcal{X}$ and $\mathcal{U}$ are compact. Let $\mathcal{C} \subset \mathcal{X}$ be a control invariant set for system~\eqref{eq:sys} subject to constraints~\eqref{eq:cnstr}, i.e., $\forall x \in \mathcal{X}$ there exists $u \in \mathcal{U}$ such that $Ax + Bu \in \mathcal{C}$. If $x(t) \in \mathcal{C}$, then problem~\eqref{eq:probDet} admits an optimal solution.
\end{lemma}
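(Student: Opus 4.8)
The plan is to show existence of a minimizer by the Weierstrass extreme value theorem: exhibit a nonempty feasible set, show the feasible set (in the relevant decision variables) is compact, and observe that the cost is continuous. The only genuinely nontrivial ingredient is nonemptiness, which is where the control invariant set $\mathcal{C}$ enters.

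First I would argue \textbf{feasibility}. Working with the tree formulation~\eqref{eq:probDet}, I construct a feasible point by induction along the tree. At the root, $x_0^{\mathbf{o_{-1}}} = x(t) \in \mathcal{C} \subseteq \mathcal{X}$ by hypothesis. Suppose a node at depth $k$ has state $x_k^{\mathbf{o_{k-1}}} \in \mathcal{C}$. By the defining property of the control invariant set $\mathcal{C}$, there exists $u \in \mathcal{U}$ with $A x_k^{\mathbf{o_{k-1}}} + B u \in \mathcal{C}$; set $u_k^{\mathbf{o_k}} = u$ for every child $\mathbf{o_k}$ (the choice is independent of the new observation). Then every successor state $x_{k+1}^{\mathbf{o_k}} = A x_k^{\mathbf{o_{k-1}}} + B u_k^{\mathbf{o_k}}$ lies in $\mathcal{C} \subseteq \mathcal{X}$, and the input constraint $u_k^{\mathbf{o_k}} \in \mathcal{U}$ holds. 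The unnormalized beliefs $v_{k+1}^{\mathbf{o_{k+1}}}$ are then determined by the recursion $v_{k+1}^{\mathbf{o_{k+1}}} = A_e(o_{k+1}, x_{k+1}^{\mathbf{o_k}}) v_k^{\mathbf{o_k}}$ and impose no additional restriction. Iterating from $k=0$ to $N-1$ produces a feasible $\mathbf{u}$, so the feasible set is nonempty.

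Next I would argue \textbf{compactness} of the feasible set for $\mathbf{u}$. Every component $u_k^{\mathbf{o_k}}$ is constrained to lie in the compact set $\mathcal{U}$, so the feasible set is a subset of the compact product $\mathcal{U}^{\,\sum_{k=0}^{N-1}|\mathcal{O}|^k}$; it remains to check it is closed. The state variables $x_k^{\mathbf{o_k}}$ are continuous (indeed affine) functions of $\mathbf{u}$ and $x(t)$ obtained by composing the dynamics along each branch, so the constraints $x_{k+1}^{\mathbf{o_k}} \in \mathcal{X}$ translate into closed conditions on $\mathbf{u}$ (preimages of the closed set $\mathcal{X}$ under continuous maps), and likewise $u_k^{\mathbf{o_k}} \in \mathcal{U}$ is closed. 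A closed subset of a compact set is compact.

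Finally, \textbf{continuity of the objective}: each $x_k^{\mathbf{o_k}}$ is continuous in $\mathbf{u}$; each $v_k^{\mathbf{o_k}}$ is continuous in $\mathbf{u}$ as a composition of the maps $x \mapsto A_e(o,x)$ (continuous on $\mathbb{R}^n$ under the standing assumption that $T$ and $Z$, hence $\Omega$ and $\Theta$, are continuous in $x$) with the affine state maps; and $h, h_N$ are continuous. Hence the cost in~\eqref{eq:probDet} is a finite sum of continuous functions of $\mathbf{u}$, thus continuous. Weierstrass' theorem then gives an optimal solution, which by the equivalence of~\eqref{eq:probGoal}, \eqref{eq:costRef} and~\eqref{eq:probDet} established above is an optimal solution of the FTOCP. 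I expect the main obstacle — or rather the only point requiring care — to be making the feasibility construction precise: one must choose the control inputs in a way that keeps every branch inside $\mathcal{C}$ simultaneously, which works because the invariance property lets the controller ignore the incoming observation when selecting $u_k^{\mathbf{o_k}}$, so the same recursive choice applies uniformly across all $|\mathcal{O}|^k$ nodes at each depth.
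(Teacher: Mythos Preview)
Your proposal is correct and follows essentially the same route as the paper's proof: feasibility via control invariance of $\mathcal{C}$, then existence of a minimizer from compactness. The paper's argument is a two-line sketch (``a sequence of $N$ control actions keeps the system inside $\mathcal{C}$, hence feasible; compactness of $\mathcal{X}$ and $\mathcal{U}$ yields the result''), whereas you spell out the tree induction, closedness of the feasible set, and continuity of the objective explicitly; you also make visible the mild regularity needed on $T$ and $Z$ for the cost to be continuous, which the paper leaves implicit.
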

\begin{proof}
By definition, we have that for $x(t) \in \mathcal{C}$ there exists a sequence of $N$ control actions that keep the system inside $\mathcal{C}$. Hence, problem~\eqref{eq:probDet} is feasible. Compactness of state and input constraint sets yields the desired result.
\end{proof}

% Notice that a control invariant set for system~\eqref{eq:sys} subject to constraints~\eqref{eq:cnstr} can be computed as discussed in~\cite{borrelli2017predictive}.

\subsection{Practical Approach}\label{sec:pracApproach}
The FTOCP~\eqref{eq:probDet} is a finite-dimensional NLP that can be solved with off-the-self solvers. However, the computational cost of solving~\eqref{eq:probDet} is non-polynomial in the horizon length, as the number of decision variables from~\eqref{eq:varProbDet} grows exponentially with the horizon length $N$. Indeed, at each time step $k$ the predicted trajectory branches as a function of the discrete observation $o_k \in \mathcal{O}$, as shown in Figure~\ref{fig:tree}. In this section, we introduce an approximation to the FTOCP~\eqref{eq:probDet}, where the predicted trajectory branches every $N_b$ time steps. This strategy allows us to limit the number of optimization variables and, for a prediction horizon of $N$ steps, the computational burden is proportional to the ratio $N/N_b$. 
%In what follows, we first introduce an approximation to the FTOCP~\eqref{eq:probDet}. 
%Then, we show that such approximation yields an upper-bound to the cost of the FTOCP~\eqref{eq:probDet}.

Given the current state $x(t)$, the environment's belief $b(t)$, the constant $N_b \in \Zsp$, and the prediction horizon $N = P N_b$ with $P\in\Zsp$, we solve the following FTOCP:
\begin{equation}\label{eq:probPrac}
\begin{aligned}
\hat J(x(t), b(t))& \\= \min_{ \ak} ~ & \sum_{k=0}^{N-1} \sum_{\oj \in \mathcal{O}^{j(k)}} \sum_{e \in \mathcal{E}} \bar v^{\oj}_k[e] h(s_k^{\oj}, a^{\oj}_k, e) \\
    & \quad \quad \quad + \sum_{\ojN \in \mathcal{O}^{j(N)}} \sum_{e \in \mathcal{E}} \bar v^{\ojN}_k[e] h_N(s_N^{\ojN}, e)\\
    \text{subject to} ~~ &    s_{k+1}^{\oj} = A s_k^{\ojm} + B a_k^{\oj},\\
    & s^{\mathbf{\bar o_{-1}}}_0 = x(t), \bar v^{\mathbf{\bar o_0}}_0 = b(t), \\
    & \bar v^{\ojp}_{k+1} = C_e(\bar o_{j(k+1)}, s_{k+1}^{\oj}, k) \bar v^{\oj}_{k}, \\
    & a_k^{\oj} \in \mathcal{U}, s_{k+1}^{\oj} \in \mathcal{X},\\
    & j(k) = \floor{k/N_b}, \\
    & \forall {\oj} \in \mathcal{O}^{j(k)}, \forall k\in\{0, \ldots, N-1\},
\end{aligned}
\end{equation}
where for $P = N/N_b \in \Zsp$ and $j(k) = \floor{k/N_b}$ the matrix of decision variables
\begin{equation}\label{eq:varProbPrac}
    \ak \! = \! [a_0^{\mathbf{\bar o}_0}, \ldots, a_P^{\mathbf{\bar o}_0}, \ldots, a_{k}^{\mathbf{\bar o_{j(k)}}}\!, \ldots, a_{N-1}^{\mathbf{\bar o_{j(N-1)}}}] \! \in \! \mathbb{R}^{d \times \sum_{k=0}^{P-1} N_b|\mathcal{O}|^k}\!\!,
\end{equation}
the vector of observations $\mathbf{\bar o_{j(N-1)}} = \mathbf{\bar o_{P-1}}=[\bar o_{1}, \ldots, \bar o_{P-1}]$, and the matrix $C_e(\bar o_k, s_k^{\ojm}, k)$ is defined as:
\begin{equation}\label{eq:Cmatrix}
\begin{aligned}
    C_e(&\bar o_{j(k)},  s_k^{\ojm}, k) \\
    &= \begin{cases}
    A_e(\bar o_{j(k)}, s_k^{\ojm}) & \mbox{If } \floor{k/N_b} = t/N_b \text{ and } k>0\\
    \Omega(s_k^{\ojm}) & \mbox{Otherwise}
    \end{cases},
\end{aligned}
\end{equation}
where $\Omega(\cdot)$ is defined as in~\eqref{eq:omega_def}.

Compare the FTOCP~\eqref{eq:probDet} with the FTOCP~\eqref{eq:probPrac}. In the FTOCP~\eqref{eq:probDet}  we optimize over the tree of trajectories shown in Figure~\ref{fig:tree}, and therefore the complexity of the problem grows exponentially with the horizon length $N$. 
%Indeed, the matrix of optimization variables~\eqref{eq:varProbDet} collects $\sum_{k=0}^{N-1} |\mathcal{O}|^k = O(|\mathcal{O}|^{N-1})$ control actions. 
On the other hand, in the FTOCP~\eqref{eq:probPrac} we optimize over a tree of trajectories that branches every $N_b$ time steps, and the matrix of optimization variables~\eqref{eq:varProbPrac} grows exponentially with the ratio $P = N/N_b$. 
%In particular, we have  $N_b \sum_{k=0}^{P-1} |\mathcal{O}|^k = O(|\mathcal{O}|^{P-1})$ decision variables which are collected in the matrix~\eqref{eq:varProbPrac}. 
Therefore, in the FTOCP~\eqref{eq:probPrac} the user-defined constant $N_b$ may be used to limit the computational complexity when planning over a horizon $N$. As a trade-off, the optimal value function $\hat J$ associated with the  FTOCP~\eqref{eq:probPrac} only approximates the value function $J$ associated with the FTOCP~\eqref{eq:probDet}. %The following theorem shows that the value function $\hat J$ is an upper-bound to the value function $J$. 

\section{Static Environments, Piecewise Observation Model, and Quadratic Cost: the Exact Solution}\label{sec:MICP}
In this section, we consider problems with static environments, piecewise observation model, and convex quadratic cost function. Under these assumptions, we show that problem~\eqref{eq:probDet} can be reformulated as an MICP.% that can be solved using branch-and-bound.

In what follows, we first introduce the problem setup. Then, we show how to reformulate problem~\eqref{eq:probDet} as an MICP.

\begin{assumption}[Static Environment]\label{ass:static} The environment is static, which in turns implies that the transition function $T$ is defined as follows: $T(e,e) = 1, T(e',e) = 0, \forall e\in\mathcal{E}$ and $\forall e'\in\mathcal{E}$ such that $ e\neq e'$.
% \begin{equation*}
%     (e,e) = 1, T(e',e) = 0, \forall e\in\mathcal{E},\forall e'\in\mathcal{E}, e\neq e'.
% \end{equation*}
\end{assumption}

% Notice that the sets of environment states $\mathcal{E}$ and observations $\mathcal{O}$ are discrete and finite. Therefore, given a state $x \in \mathbb{R}^n$, the observation function $Z(\cdot, \cdot, x) : \mathcal{E} \times \mathcal{O} \rightarrow [0, 1]$ can be stored in a matrix $M(x,o) \in \mathbb{R}^{|\mathcal{E}|\times |\mathcal{E}|}$. Thus, we define the diagonal:
% \begin{equation*}
%     M(x,o)  = \text{diag}(M(x,o)[1,1], \ldots, M(x,o)[|\mathcal{E}|, |\mathcal{E}|]),
% \end{equation*} 
% where $M(x,o)[e,e] = Z(e,o,x) \in (0, 1)$.
% \begin{equation*}
% Z(e,o,x) = M(x,o)[e,e] = Z(e,o,x) \in (0, 1).
% \end{equation*}

\begin{assumption}[Piecewise Observation Model]\label{ass:pwa} The observation model is a piecewise function of the system state $x$. In particular, given $R$ disjointed polytopic regions $\{\mathcal{X}_i\}_{i=1}^R$ such that $\cup_{i=1}^R \mathcal{X}_i = \mathcal{X}$, we have that: $Z(e, o, x) = M_i(e, o) \text{ if } x \in \mathcal{X}_i$,
% \begin{equation*}
%     Z(e, o, x) = M_i(e, o) \text{ if } x \in \mathcal{X}_i,
% \end{equation*}
for a set of functions $M_i:\mathcal{E} \times \mathcal{O} \rightarrow [0, 1]$.
\end{assumption}

\begin{assumption}[Convex Quadratic Cost Function]\label{ass:cost}
For a fixed environment state $e \in \mathcal{E}$, the stage cost $h( \cdot, \cdot, e) : \mathbb{R}^n \times \mathbb{R}^d \rightarrow \mathbb{R}$ and the terminal cost $h_N(\cdot, e) : \mathbb{R}^n \rightarrow \mathbb{R}$ are convex and quadratic, i.e., $h( x, u, e) = ||x - x_g^{(e)}||_Q + ||u-u_g^{(e)}||_R, h_N(x, e) = ||x - x_g^{(e)}||_{Q_N}$
% \begin{equation*}
% \begin{aligned}
%     h( x, u, e) &= ||x - x_g^{(e)}||_Q + ||u-u_g^{(e)}||_R,\\
%     h_N(x, e) &= ||x - x_g^{(e)}||_{Q_N}
% \end{aligned}
% \end{equation*}
where the weighted square norm $||x||_Q = x^\top Q x$ for the positive semi-definite matrix $Q$, and the vectors $x_g^{(e)} \in \mathbb{R}^n$ and $u_g^{(e)} \in \mathbb{R}^d$ are user-defined.
\end{assumption}

\begin{assumption}[Strictly Positive Belief]\label{ass:belifStrictlyPositive}
All entries of the belief vector $b(0)$ are strictly positive, i.e., $b(0) \in \mathcal{B}_+ = \{b \in \mathbb{R}^{|\mathcal{E}|}_{0+} : \sum_{i=1}^{|\mathcal{E}|}b[e]=1\}$. Furthermore, we cannot observe the true environment state $e\in\mathcal{E}$ from any state $x \in \mathcal{X}$, i.e., $\mathbb{P}(o=e|e , x) = Z(e, o, x) : \mathcal{E} \times \mathcal{O} \times \mathbb{R}^n \rightarrow (0,1)$.
% \begin{equation*}
%     \mathbb{P}(o=e|e , x) = Z(e, o, x) : \mathcal{E} \times \mathcal{O} \times \mathbb{R}^n \rightarrow (0,1).
% \end{equation*}
\end{assumption}

Given the system's state $x(t)$ and the inverse belief vector $z(t) = 1/b(t) \in \mathbb{R}^{|\mathcal{E}|}$, we define the following FTOCP:
\begin{equation}\label{eq:probDetPWA}
\begin{aligned}
V(x(t), z(t)) = \min_{\uk, \boldsymbol{\delta}} \quad & \sum_{k=0}^{N-1} \sum_{\ok \in \mathcal{O}^k} \sum_{e \in \mathcal{E}} \frac{ h(x_k^{\ok}, u^{\ok}_k, e)}{z^{\ok}_k[e]} \\
    & \quad \quad \quad \quad + \sum_{\oN \in \mathcal{O}^N} \sum_{e \in \mathcal{E}} \frac{ h_N(x_N^{\oN}, e)}{z^{\oN}_N[e]}\\
    \text{subject to} \quad~ &    x_{k+1}^{\ok} = A x_k^{\okm} + B u_k^{\ok},\\
    & x^{\mathbf{o_{-1}}}_0 = x(t), z^{\mathbf{o_{0}}}_0 = z(t), \\
    & u_k^{\ok} \in \mathcal{U}, x_k^{\ok} \in \mathcal{X},\\
    & z^{\okp}_{k+1} = \textstyle \sum_{i=1}^R D_i(o_{k+1}) z^{\ok}_{k} \delta^{\ok}_{k,i} ,\\
    & \delta^{\ok}_{k,i} = \mathds{1}_{\mathcal{X}_i}(x_k^{\ok}), \forall i \in \{1,\ldots, R\},\\
    &\forall k\in\{0,\ldots,N-1\},
\end{aligned}
\end{equation}
where the indicator function $\mathds{1}_{\mathcal{X}_i}(x_k^{\ok}) = 1$ if $x_k^{\ok} \in \mathcal{X}_i$ and zero otherwise,
%\begin{equation*}
%    \mathds{1}_{\mathcal{X}_i}(x_k^{\ok}) = \begin{cases}
%    1 & \text{If }x_k^{\ok} \in \mathcal{X}_i \\
%    0 & \text{Otherwise}
%    \end{cases},
%\end{equation*}
and the optimization variables
\begin{equation}
\begin{aligned}
    \uk &=  [u_0^{\mathbf{o_0}}, \ldots, u_{N-1}^{\mathbf{o_{N-1}}}] \in \mathbb{R}^{d \times \textstyle \sum_{k=0}^{N-1}|\mathcal{O}|^k}, \\
    \boldsymbol{\delta} &= [\delta_{0,1}^{\mathbf{o_0}}, \ldots, \delta_{N-1, R}^{\mathbf{o_{N-1}}}] \in \{0, 1\}^{ R \textstyle \sum_{k=0}^{N-1}|\mathcal{O}|^k}.
\end{aligned}
\end{equation}
Notice that at each time $k$ for the vector of observations $\ok$, we have that the integer variable $\delta_{k,i}^{\mathbf{o_k}}$ equals one if and only if the state $x_{k}^{\mathbf{o_k}} \in \mathcal{X}_i$.
In the above problem, for all $i \in \{1,\ldots, R\}$ the entries of diagonal matrices $D_i(o) \in \mathbb{R}^{|\mathcal{E}|\times |\mathcal{E}|}$ are defined as follows:
\begin{equation}\label{eq:defInverse}
    D_i(o)[e,e] = 1/M_i(o,e), \forall e \in \mathcal{E}, \forall o \in \mathcal{O}.
\end{equation}

The following theorem shows that, under Assumptions~\ref{ass:static}--\ref{ass:belifStrictlyPositive}, problem~\eqref{eq:probDetPWA} is equivalent to problem~\eqref{eq:probGoal}. Furthermore, problem~\eqref{eq:probDetPWA} can be recast as an MICP.

\begin{theorem}\label{th:equivalence}
Consider problem~\eqref{eq:probGoal} and problem~\eqref{eq:probDetPWA}. Let Assumptions~\ref{ass:static}--\ref{ass:belifStrictlyPositive} hold. Then, for $z(t) = 1/b(t)$ we have that 
\begin{equation*}
    J(x(t), b(t)) = V(x(t), z(t)),
\end{equation*}
for all $x(t) \in \mathcal{X}$. Furthermore, for all $z(t) \in \mathbb{R}_+^{|\mathcal{E}|}$ and $x(t) \in \mathbb{R}^n$ problem~\eqref{eq:probDetPWA} can be recast as a Mixed-Integer Convex Program (MICP).
\end{theorem}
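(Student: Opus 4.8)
The plan is to prove the theorem in two parts: first the equivalence $J(x(t),b(t)) = V(x(t),z(t))$ under Assumptions~\ref{ass:static}--\ref{ass:belifStrictlyPositive}, and then the MICP reformulation of~\eqref{eq:probDetPWA}.

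\textbf{Part 1: Equivalence of the value functions.} I would start from the deterministic reformulation~\eqref{eq:probDet}, which by the Proposition (cost reformulation) and the subsequent tree-of-actions argument already equals~\eqref{eq:probGoal}. So it suffices to show~\eqref{eq:probDet} equals~\eqref{eq:probDetPWA} via the change of coordinates $z_k^{\ok} = 1/v_k^{\ok}$ (entrywise). The key computation is to track how the unnormalized belief recursion $v_{k+1}^{\okp} = A_e(o_{k+1}, x_{k+1}^{\ok}) v_k^{\ok}$ transforms. Under Assumption~\ref{ass:static}, $\Omega(x) = I$, so $A_e(o,x) = \Theta(o,x) = \mathrm{diag}(Z(1,o,x),\ldots,Z(|\mathcal{E}|,o,x))$ is \emph{diagonal}; hence the recursion decouples across $e$: $v_{k+1}^{\okp}[e] = Z(e, o_{k+1}, x_{k+1}^{\ok})\, v_k^{\ok}[e]$. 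Under Assumption~\ref{ass:pwa}, $Z(e,o,x) = M_i(o,e)$ when $x \in \mathcal{X}_i$, and Assumption~\ref{ass:belifStrictlyPositive} guarantees $Z \in (0,1)$ so all these factors are strictly positive and the entries $v_k^{\ok}[e]$ stay strictly positive (given $b(0) \in \mathcal{B}_+$), making $z_k^{\ok}[e] = 1/v_k^{\ok}[e]$ well-defined. Taking reciprocals of the decoupled scalar recursion gives $z_{k+1}^{\okp}[e] = \frac{1}{M_i(o_{k+1},e)} z_k^{\ok}[e] = D_i(o_{k+1})[e,e]\, z_k^{\ok}[e]$ whenever $x_{k+1}^{\ok} \in \mathcal{X}_i$, which is exactly the constraint $z^{\okp}_{k+1} = \sum_{i=1}^R D_i(o_{k+1}) z^{\ok}_{k}\delta^{\ok}_{k,i}$ with $\delta^{\ok}_{k,i} = \mathds{1}_{\mathcal{X}_i}(x_k^{\ok})$, noting that exactly one indicator is nonzero because the $\{\mathcal{X}_i\}$ are disjoint and cover $\mathcal{X}$. (One detail: the paper's index on $\delta$ uses $x_k^{\ok}$ while the recursion naturally wants the region of $x_{k+1}^{\ok}$; I would reconcile this by the convention that $z^{\okp}_{k+1}$ is updated using the region containing the newly reached state, and check the off-by-one carefully against~\eqref{eq:probDetPWA}.) Finally, since $v_k^{\ok}[e] = 1/z_k^{\ok}[e]$, the objective terms $v_k^{\ok}[e]\, h(x_k^{\ok},u_k^{\ok},e)$ become $h(x_k^{\ok},u_k^{\ok},e)/z_k^{\ok}[e]$, matching~\eqref{eq:probDetPWA} term by term; the constraint sets on $x,u$ are identical. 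Hence the feasible sets and objectives coincide under the bijection $b \leftrightarrow z = 1/b$, giving $J = V$.

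\textbf{Part 2: MICP reformulation.} Here the state and input constraints $\mathcal{X}, \mathcal{U}$ are polytopic (the $\mathcal{X}_i$ are polytopes covering $\mathcal{X}$), and the dynamics are linear, so all of those are linear constraints in $\uk$ and the $x_k^{\ok}$. The indicator constraints $\delta^{\ok}_{k,i} = \mathds{1}_{\mathcal{X}_i}(x_k^{\ok})$ together with $\sum_i \delta^{\ok}_{k,i} = 1$ are standard big-M mixed-integer-linear encodings of membership in a polytopic partition. The two nonroutine pieces are: (i) the bilinear belief-update constraint $z^{\okp}_{k+1} = \sum_i D_i(o_{k+1}) z^{\ok}_k \delta^{\ok}_{k,i}$, which is a product of a continuous variable $z_k^{\ok}$ with a binary $\delta^{\ok}_{k,i}$ — this is linearized exactly by introducing auxiliary variables $w^{\ok}_{k,i} = z^{\ok}_k \delta^{\ok}_{k,i}$ with the usual McCormick/big-M constraints ($0 \le w \le M\delta$, $z - M(1-\delta) \le w \le z$), valid because $z$ is bounded (it lies in a compact set, since $x$ ranges over compact $\mathcal{X}$ and the finitely many $M_i(o,e) \in (0,1)$ bound the products away from $0$ and $\infty$ over the finite horizon); and (ii) the objective, which contains fractional terms $h(x_k^{\ok},u_k^{\ok},e)/z_k^{\ok}[e]$ with $h$ convex quadratic (Assumption~\ref{ass:cost}) and $z > 0$. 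The crucial observation is that $q(x,u,z) := \|x - x_g\|_Q^2 / z$ (and likewise the terminal term) is \emph{jointly convex} in $(x,u,z)$ on $\{z > 0\}$ — it is a quadratic-over-linear function, representable via a rotated second-order cone constraint $\|L(x-x_g)\|^2 \le z\, s$, $t \ge \ldots$, and $u$-terms analogously — so after introducing epigraph variables the whole objective becomes a sum of SOC-representable (hence convex) terms. Combining the linear dynamics/constraints, the big-M indicator and product linearizations, and the SOC epigraph reformulation of the objective yields a mixed-integer \emph{convex} (in fact mixed-integer second-order-cone) program, as claimed.

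\textbf{Main obstacle.} The routine parts (linear dynamics, polytopic big-M encodings) are standard; the real work is Part 1's bookkeeping — verifying that Assumption~\ref{ass:static} makes $A_e$ diagonal so the per-component reciprocal identity $z = 1/v$ propagates cleanly through the whole tree, that Assumption~\ref{ass:belifStrictlyPositive} keeps every $z_k^{\ok}[e]$ finite and positive (so no division by zero and the coordinate change is a genuine bijection), and pinning down the time-index convention on $\delta$ so the recursion in~\eqref{eq:probDetPWA} exactly reproduces~\eqref{eq:beliefUpdateComponent}. For Part 2, the one thing that needs care rather than mere citation is arguing the uniform bounds on $z$ that make the big-M linearization of $z\delta$ exact; this follows from compactness of $\mathcal{X}$, finiteness of the partition and horizon, and $M_i(o,e) \in (0,1)$, but should be stated explicitly.
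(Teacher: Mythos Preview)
Your proposal is correct and follows essentially the same route as the paper: Part~1 uses the static-environment assumption to make $A_e$ diagonal so the entrywise reciprocal $z=1/v$ propagates through the unnormalized-belief recursion, and Part~2 invokes convexity of quadratic-over-linear for the objective together with big-$M$/MLD encodings (the paper writes the explicit bound $z_k^{\max}[e] = \big(\max_{o,i} D_i(o)\big)^{k-1} z_0[e]$ and cites~\cite{bemporad1999control}, which is exactly your McCormick linearization with an explicit $M$). Your flag on the $\delta$ time-index is well taken---the paper's indexing is slightly loose there---but it does not affect the argument.
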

\begin{proof} First we show that $z_k^{\ok} = 1/v^{\ok}_k$ for all $k \in \{0, \ldots, N-1\}$. 
From Assumptions~\ref{ass:static}--\ref{ass:pwa}, we have that for $x_k \in \mathcal{X}_i$ the unnormalized belief update is
\begin{equation}
\begin{aligned}
     v^{\ok}_k[e] & = {Z(e, o_k,x_k)}\sum_{i\in\mathcal{E}} T(e,i) v^{\okm}_{k-1}[i]\\
     & ={Z(e, o_k,x_k)} v^{\okm}_{k-1}[e]\\
     & = M_i(e, o_k) v^{\okm}_{k-1}[e].
\end{aligned}
\end{equation}
From the above equation and definition~\eqref{eq:defInverse}, we have that $z_k^{\ok}[e] = 1/v^{\ok}_k[e], \forall e \in \mathcal{E}$,
% \begin{equation*}
%     z_k^{\ok}[e] = 1/v^{\ok}_k[e], \forall e \in \mathcal{E},
% \end{equation*}
which in turns implies that the optimal cost from problem~\eqref{eq:probDetPWA} equals the one from problem~\eqref{eq:probDet} and therefore
\begin{equation*}
    J(x(t), b(t)) = V(x(t), z(t)),
\end{equation*}
for all $x(t) \in \mathcal{X}$.

Notice that the objective function in problem~\eqref{eq:probDetPWA} is convex, as it is given by a convex quadratic function over a strictly positive linear function~\cite{boyd2004convex}. Furthermore, given the initial condition $z(t)$ we can compute an upper-bound $z_{k}^{\textrm{max}}[e]$ for each $e$-th entry of the unnormalized belief $z_k^\ok$, i.e.,
\begin{equation}\label{eq:uppBound}
    z_{k}^{\textrm{max}}[e] = \Big(\max_{ \substack{ o \in \mathcal{O}, i \in \{1, \ldots, R\}}} D_i(o) \Big)^{k-1} z_0^{\mathbf{o_0}}[e] \geq z_k^\ok[e].
\end{equation}
Finally, we have that the piecewise model from Assumption~\ref{ass:pwa} is a mixed logical dynamical (MLD) systems~\cite{bemporad1999control}. Thus following the procedure presented in~\cite{bemporad1999control}, problem~\eqref{eq:probDetPWA} can be recast as an MICP using the upper-bound from~\eqref{eq:uppBound}.
\end{proof}

\begin{corollary}\label{cor:cvx}
Consider problem~\eqref{eq:probDetPWA} and let Assumptions~\ref{ass:static}--\ref{ass:belifStrictlyPositive} hold. If the observation model is not a function of the system's state, i.e., for some $G:\mathcal{E} \times \mathcal{O} \rightarrow [0, 1]$ we have that 
\begin{equation*}
    Z(e, o, x) = G(e,o), \forall x \in \mathcal{X}.
\end{equation*}
Then, the value function $V(x(t), z(t))$ from problem~\eqref{eq:probDetPWA} is convex in its arguments.
\end{corollary}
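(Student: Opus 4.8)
The plan is to show that, when the observation model carries no state dependence, problem~\eqref{eq:probDetPWA} reduces to a purely \emph{continuous} program whose objective is \emph{jointly} convex in the parameters $(x(t),z(t))$ and the decision variables, over a convex feasible set; convexity of $V$ then follows from the standard fact that partial minimization of a jointly convex function over a convex set is convex~\cite{boyd2004convex}. The first step is to dispose of the integer variables. Since $Z(e,o,x)=G(e,o)$ is state-independent, Assumption~\ref{ass:pwa} holds with $M_i(e,o)=G(e,o)$ for every $i$, so all the diagonal matrices $D_i(o)$ coincide; equivalently one may take $R=1$ without loss of generality, since the partition of $\mathcal{X}$ plays no role. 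Because $\{\mathcal{X}_i\}$ partitions $\mathcal{X}$ and the state constraint forces $x_k^{\mathbf{o_k}}\in\mathcal{X}$, the indicator constraints give $\sum_{i}\delta_{k,i}^{\mathbf{o_k}}=1$, so the belief recursion collapses to $z_{k+1}^{\mathbf{o_{k+1}}}=D(o_{k+1})\,z_k^{\mathbf{o_k}}$ with $D(o)[e,e]=1/G(e,o)$, independent of $\boldsymbol{\delta}$; as $\boldsymbol{\delta}$ enters neither the objective nor any remaining constraint, it is dropped. Unrolling from $z_0^{\mathbf{o_0}}=z(t)$ yields $z_k^{\mathbf{o_k}}[e]=\big(\prod_{j=1}^{k}1/G(e,o_j)\big)z(t)[e]$, and by Assumption~\ref{ass:belifStrictlyPositive} the coefficient $\prod_{j=1}^{k}1/G(e,o_j)$ is strictly positive, so each $z_k^{\mathbf{o_k}}[e]$ is a strictly-positive-linear function of $z(t)$.

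Next I would argue that the objective becomes a sum of ``convex $\circ$ affine'' terms. The linear dynamics $x_{k+1}^{\mathbf{o_k}}=A x_k^{\mathbf{o_{k-1}}}+B u_k^{\mathbf{o_k}}$ with $x_0^{\mathbf{o_{-1}}}=x(t)$ make each predicted state $x_k^{\mathbf{o_k}}$ an affine function of $(x(t),\mathbf{u})$, so the map $(x(t),z(t),\mathbf{u})\mapsto(x_k^{\mathbf{o_k}},u_k^{\mathbf{o_k}},z_k^{\mathbf{o_k}}[e])$ is affine and its last coordinate lies in $(0,\infty)$ whenever $z(t)\in\mathbb{R}_+^{|\mathcal{E}|}$. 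Invoking the quadratic-over-linear fact---for $P\succeq 0$, $(y,w)\mapsto y^\top P y/w$ is jointly convex on $\mathbb{R}^m\times(0,\infty)$~\cite{boyd2004convex}---together with Assumption~\ref{ass:cost}, the maps $(x,u,w)\mapsto h(x,u,e)/w$ and $(x,w)\mapsto h_N(x,e)/w$ are jointly convex for $w>0$. Composing with the affine map above, every summand $h(x_k^{\mathbf{o_k}},u_k^{\mathbf{o_k}},e)/z_k^{\mathbf{o_k}}[e]$ and its terminal analogue is convex in $(x(t),z(t),\mathbf{u})$, and a sum of convex functions is convex, so the objective $\Phi(x(t),z(t),\mathbf{u})$ of~\eqref{eq:probDetPWA} is jointly convex on $\mathbb{R}^n\times\mathbb{R}_+^{|\mathcal{E}|}\times\mathbb{R}^{d\times\sum_{k=0}^{N-1}|\mathcal{O}|^k}$.

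Finally I would close with the partial-minimization step. With the binaries removed, the remaining constraints are $u_k^{\mathbf{o_k}}\in\mathcal{U}$, $x_k^{\mathbf{o_k}}\in\mathcal{X}_1$, and $z(t)\in\mathbb{R}_+^{|\mathcal{E}|}$, i.e.\ affine (polytopic) constraints in $(x(t),\mathbf{u})$ intersected with the convex set $\mathbb{R}_+^{|\mathcal{E}|}$; hence the feasible set, viewed in the joint space of $(x(t),z(t),\mathbf{u})$, is convex. Writing $V(x(t),z(t))=\min_{\mathbf{u}}\Phi(x(t),z(t),\mathbf{u})$ over this set and applying the result that partial minimization of a jointly convex function over a convex set yields a convex function of the remaining variables~\cite{boyd2004convex}, we conclude that $V$ is convex in $(x(t),z(t))$.

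The step I expect to be the main obstacle is the bookkeeping that makes each fractional term a genuine ``convex $\circ$ affine'' composition \emph{simultaneously} in the full variable vector: one must recognize the belief coordinates as \emph{linear} in $z(t)$ (not merely affine in observation-dependent constants), the state coordinates as affine in $(x(t),\mathbf{u})$, and keep the denominators strictly positive by restricting to $z(t)\in\mathbb{R}_+^{|\mathcal{E}|}$---all of which hinges on the state-independence of $Z$, which is precisely what collapses the belief dynamics to a constant linear map and lets the binary variables be eliminated. Everything after that is routine convexity calculus.
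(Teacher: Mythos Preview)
Your proposal is correct and follows essentially the same route as the paper: once the observation model is state-independent, the inverse-belief recursion collapses to the fixed linear map $z_{k+1}^{\mathbf{o_{k+1}}}=F(o_{k+1})z_k^{\mathbf{o_k}}$ (the paper writes $F$ where you write $D$), the integer variables disappear, and the problem is a convex parametric program whose optimal value is convex in the parameters. The only difference is presentational: the paper compresses the last step into a one-line appeal to the parametric-programming result in~\cite{bemporad2006algorithm}, whereas you spell out the underlying joint-convexity (quadratic-over-linear composed with affine) and partial-minimization argument from~\cite{boyd2004convex}; your version is more self-contained but not a genuinely different approach.
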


\begin{proof}
As the observation model does not dependent on the system's state, we have that the belief update in problem~\eqref{eq:probDetPWA} can be re-written as follows: $z^{\okp}_{k+1} = F(o_{k+1}) z^{\ok}_{k}$, where $F(o)[e,e] = 1/G(o,e)$ for all $e \in \mathcal{E}$ and $o \in \mathcal{O}$.
%can be rewritten as 
%\begin{equation*}
%\begin{aligned}
%V(x(t), z(t)) = \min_{\uk} \quad & \sum_{k=0}^{N-1} \sum_{\ok \in \mathcal{O}^k} \sum_{e \in \mathcal{E}} \frac{ h(x_k^{\ok}, u^{\ok}_k, e)}{z^{\ok}_k[e]} \\
%    & \quad \quad \quad \quad + \sum_{\oN \in \mathcal{O}^N} \sum_{e \in \mathcal{E}} \frac{ h_N(x_N^{\oN}, e)}{z^{\oN}_N[e]}\\
%    \text{s.t.} \quad~ &    x_{k+1}^{\ok} = A x_k^{\okm} + B u_k^{\ok},\\
%    & x^{\mathbf{o_{-1}}}_0 = x(t), z^{\mathbf{o_{0}}}_0 = z(t), \\
%    & u_k^{\ok} \in \mathcal{U}, x_k^{\ok} \in \mathcal{X},\\
%    & z^{\okp}_{k+1} = F(o_{k+1}) z^{\ok}_{k},\\
%&\forall k\in\{0,\ldots,N-1\},
%\end{aligned}
%\end{equation*}
%where $F(o)[e,e] = 1/G(o,e)$ for all $e \in \mathcal{E}$ and $o \in \mathcal{O}$. 
Therefore, problem~\eqref{eq:probDetPWA}
is a convex parametric program and $V(x(t), z(t))$ is a convex function~\cite{bemporad2006algorithm}.
\end{proof}

% \begin{remark}
% In Corollary~\ref{cor:cvx}, we have shown that the value function from problem~\eqref{eq:probDetPWA} is convex. On the other hand, we have that  the value function from problem~\eqref{eq:probDet} is not necessary convex.
% This fact suggests that it is easier to approximate the value function from problem~\eqref{eq:probDetPWA} rather than one from~\eqref{eq:probDet}. In particular, we have that the nonlinear change of coordinates that we used to define the inverse belief $z(t)$ allows us to convexity the problem, when the environment is static, the observation model constant, and the constraints and cost convex.
% \end{remark}

% \begin{remark}\label{remark:complexity}
% The complexity of solving the MICP~\eqref{eq:probDetPWA} grows exponentially with the horizon $N$, and the computational burden may be reduced using the strategy presented in Section~\ref{sec:pracApproach}, where we assumed that an observation is collected every $N_b$ time steps. In this case, the computational complexity scales with the ratio $N/N_b$, as we will show in the next section.
% \end{remark}

\section{Examples}\label{sec:examples}
We tested the proposed strategy on two navigation problems, where a linear system has to reach a goal location that may be inferred only through partial observations. The goal location represents an object to be retrieved and whose location is only partially known. We consider the following discrete time unstable point mass model:
\begin{equation}\label{eq:doubleIntegrator}
    x_{k+1} = \begin{bmatrix} 1 & 0 & 1 & 0\\
                              0 & 1 & 0 & 1\\
                              0 & 0 & 1.1 & 0\\
                              0 & 0 & 0 & 1.1
    \end{bmatrix}x_k + \begin{bmatrix}
    0 & 0\\
    0 & 0\\
    1 & 0\\
    0 & 1
    \end{bmatrix}u_k,
\end{equation}
where the state vector $x_k = [X_k, Y_k, v^x_k, v^y_k]$ collects the position of the system $(X_k, Y_k)$ and the velocity $(v^x_k, v^y_k)$ along the $X$--$Y$ plane. In the above system, the input $u_k = [a_k^x, a^y_k]$ represents the accelerations along the $X$ and $Y$ coordinates.

\subsection{Mixed Observable Regulation Problem}
In this example, the constraint sets are defined as follows:
\begin{equation*}
\begin{aligned}
    &\mathcal{U} = \{u \in \mathbb{R}^2 : ||u||_\infty \leq 10\}, \\
    &\mathcal{X} = \{[X, Y, v^x, v^y]^\top  \in \mathbb{R}^4 : -5 \leq X \leq 15, ||Y||_\infty \leq 10\},
\end{aligned}
\end{equation*}
and the cost matrices from Assumption~\ref{ass:cost} are
\begin{equation*}
    Q = 10^{-5}I_n, R = 10^{-3}I_d, \text{ and } Q_N = 10^{2}I_n,
\end{equation*}
where $I_n \in \mathbb{R}^{n \times n}$ represents the identity matrix.

The set of partially observable states $\mathcal{E} = \{0, 1\}$ and the associated goal locations $x_g^{(0)} = [14, 8, 0, 0]^
\top$ and $x_g^{(1)} = [14,-8, 0, 0]^\top$, as shown in Figures~\ref{fig:res_Scenario1}--\ref{fig:res_Scenario2}.
 % In particular, we have
% \begin{equation*}
%     x_g^{(0)} = \begin{bmatrix*}[r] 14 \\ 8 \\ 0 \\ 0 \end{bmatrix*} \text{ and } x_g^{(1)} = \begin{bmatrix*}[r] 14 \\ -8 \\ 0 \\ 0 \end{bmatrix*}.
% \end{equation*}
The environment state, and consequently the goal location, is inferred through partial observations. Given the true environment state $e \in \mathcal{E}$ and the system's state $x \in \mathbb{R}^n$, the probability of measuring the observation $o = e$ is given by the following piecewise observation model:
\begin{equation}\label{eq:res_obsModel}
    Z(o=e, e, x)=\mathbb{P}(o=e|e,x) = \begin{cases}p_1 & \mbox{If } x \in \mathcal{X}_1 \\
    p_2 = 0.85 & \mbox{If } x \in \mathcal{X}_2 \end{cases},
\end{equation}
where 
\begin{equation*}
\begin{aligned}
    \mathcal{X}_1 &= \{[X, Y, v^x, v^y] \in \mathbb{R}^4 : -1 \leq X \leq ~15, ||Y||_\infty \leq 10 \} \\
    \mathcal{X}_2 &= \{[X, Y, v^x, v^y] \in \mathbb{R}^4 : -5 \leq X < -1, ||Y||_\infty \leq 10 \}.
\end{aligned}
\end{equation*}

We implemented the finite-dimensional MICP~\eqref{eq:probDetPWA} using CVXPY~\cite{diamond2016cvxpy} and Gurobi~\cite{gurobi}. In order to limit the computational burden, we leveraged the strategy discussed in Section~\ref{sec:pracApproach} for $N=60$, and $N_b \in \{12, 15, 20, 30\}$. All computations are run on a 2015 MacBook Pro and the code can be found at~\linkCode.

\begin{figure}[t!]
    \centering
	\includegraphics[width= 1.00\columnwidth,trim=30 10 30 10,clip]{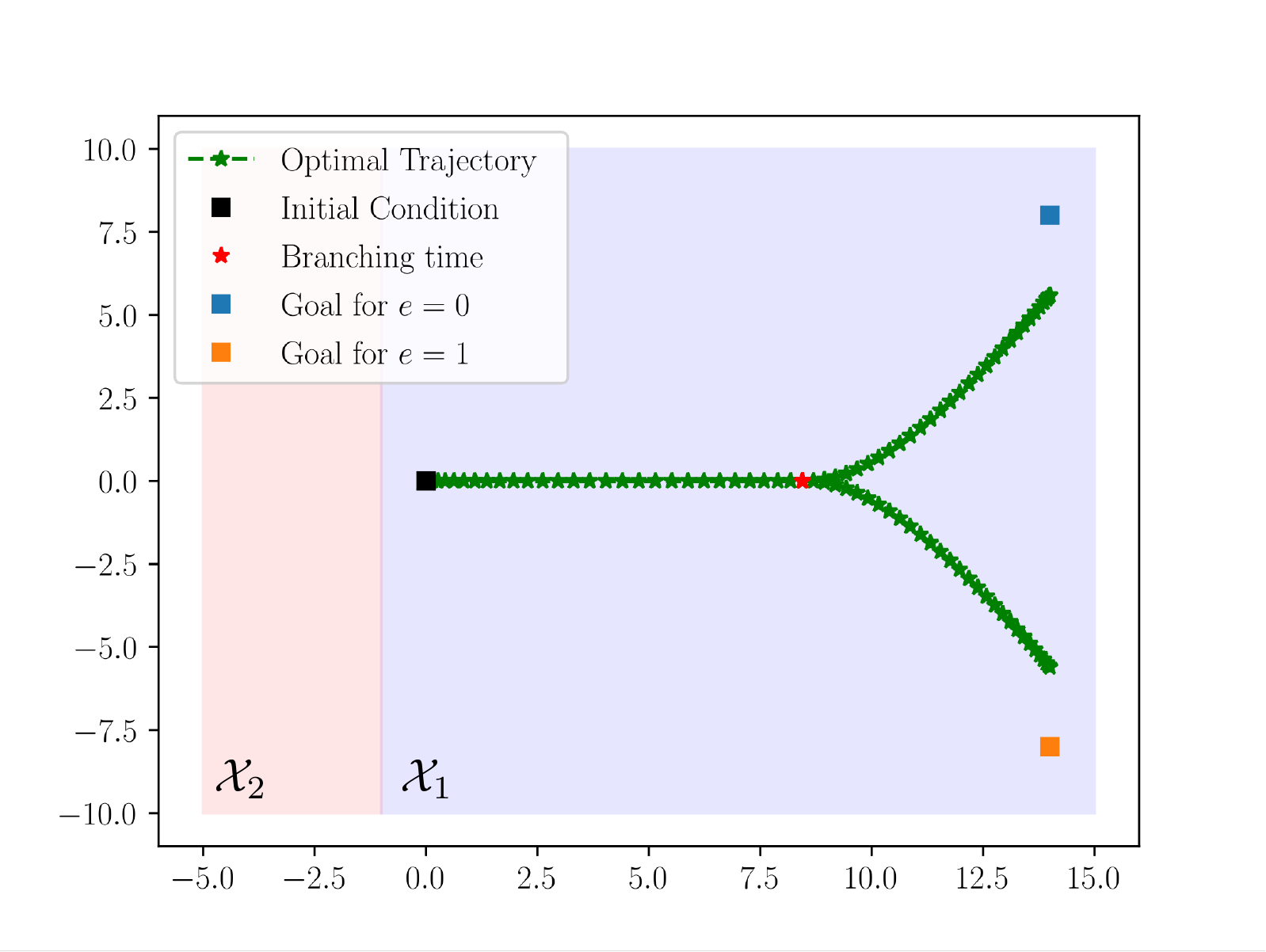}
    \vspace{-0.7cm}  
    \caption{Optimal trajectory computed solving the MICP~\eqref{eq:probDetPWA} for $N=60$ and assuming that an observation is collected every $N_b = 30$ time steps, as discussed in Section~\ref{sec:pracApproach}. In this scenario $p_1 = p_2 = 0.85$, therefore the optimizer computes a trajectory that first steers the system towards the goals and then commits to one of the two goal locations depending on observation measured at time $t=N_b$.}
    \label{fig:res_Scenario1}
        \vspace{-0.4cm}  
\end{figure}

\begin{figure}[t!]
    \centering
	\includegraphics[width= 1.0\columnwidth,trim=30 10 30 10,clip]{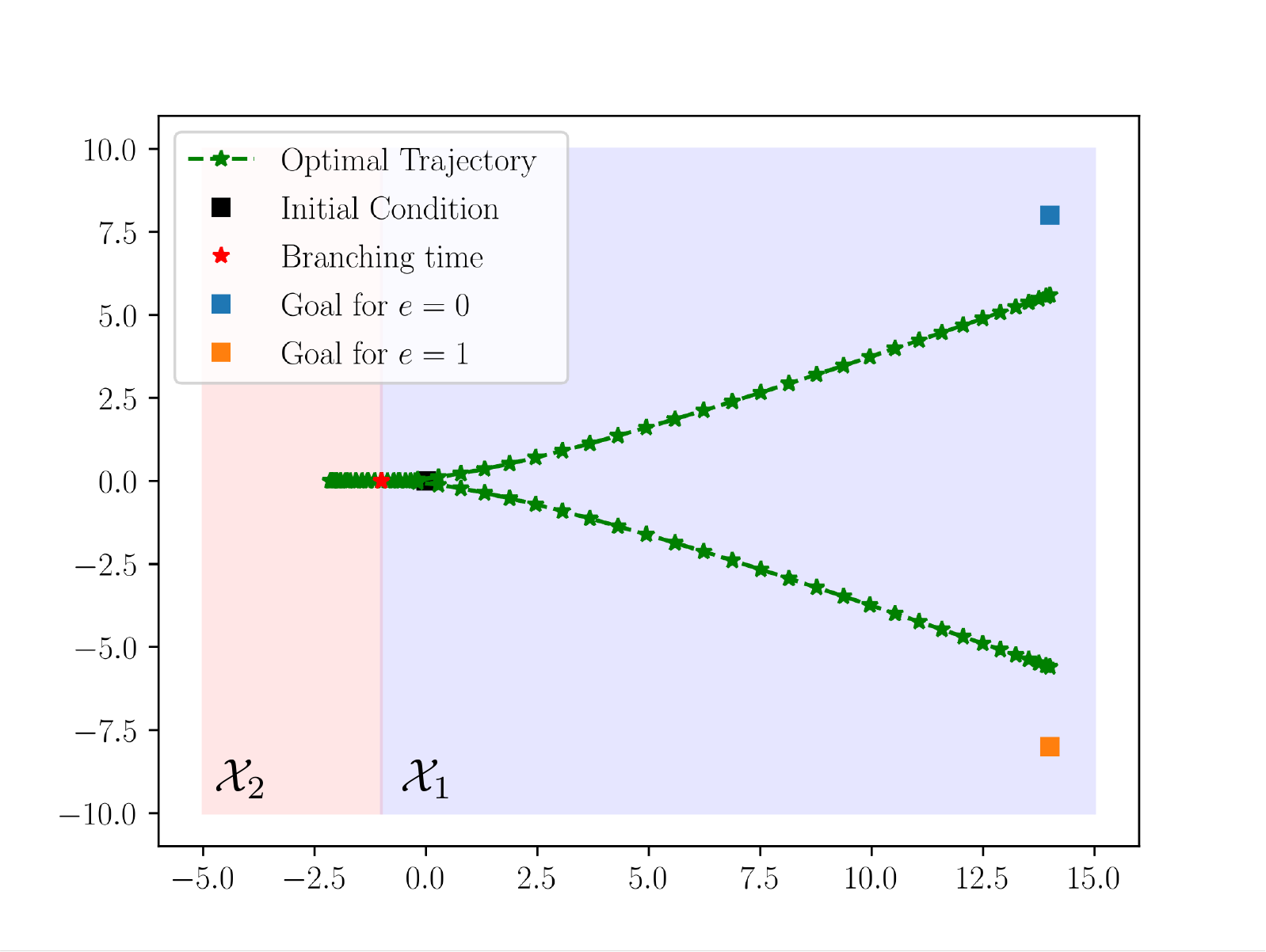}
    \vspace{-0.7cm}  
    \caption{Optimal trajectory computed solving the MICP~\eqref{eq:probDetPWA} for $N=60$ and assuming that an observation is collected every $N_b = 30$ time steps, as discussed in Section~\ref{sec:pracApproach}. In this scenario $p_1 = 0.7$, therefore the controller steers the system backwards to reach region $\mathcal{X}_2$ to collect a measurement that is correct with probability $p_2 = 0.85$, before committing to a goal location.}
    \label{fig:res_Scenario2}
        \vspace{-0.4cm}  
\end{figure}

\begin{table}[h!]
\centering
\caption{Optimal cost $V(x(0), b(0))$ and solver time for different values of $N_b$
and consequently of $P=N/N_b$.
}\label{tab:comparison}
\begin{tabular}{c|cccc}
                &   $N_b = 12$   & $N_b = 15$ &   $N_b = 20$ & $N_b = 30$ \\\midrule
$V(x(0), b(0))$ &   $1237.43$    & $1583.31$  &   $2196.75$  & $3265.31$   \\
Solver Time [s] &    $134.1$     &  $12.1$    &   $2.8$      & $1.7$      \\
$P=N/N_b$       &    $5$         &  $4$       &   $3$        & $2$        \\\midrule
\end{tabular}
\end{table}

We tested the proposed strategy for two different scenarios. In the first scenario, we set the probability $p_1$ of the observation model~\eqref{eq:res_obsModel} equal to $0.85$, and in the second one we set $p_1 = 0.7$. In both cases, we considered as initial condition $x(0) = [0, 0, 0, 0]^\top$, an initial belief $b(0) = [0.5, 0.5]^\top$, a prediction horizon $N=60$, and we assumed that an observation is collected every $N_b=30$ time steps. In the first scenario shown in Figure~\ref{fig:res_Scenario1}, the probability $p_1 = p_2 = 0.85$ and the observations collected in regions $\mathcal{X}_1$ and $\mathcal{X}_2$ are equally informative. Thus, the optimizer steers the system forward, and after collecting an observation at time $t=N_b$ commits to a goal location. On the other hand, when $p_1 = 0.7$ the observation collected in region $\mathcal{X}_1$ is not as informative as the one collected in region $\mathcal{X}_2$. Therefore, the optimizer plans a trajectory that moves backward and visits region $\mathcal{X}_2$ to collect an observation that is correct with probability $p_2 = 0.85$, before committing to steer the system towards a goal location, as shown in Figure~\ref{fig:res_Scenario2}.

Table~\ref{tab:comparison} shows the optimal cost and the computational time to solve the MICP for different values of $N_b$ and for $N = 60$. As discussed in Section~\ref{sec:pracApproach}, as $P = N/N_b$ gets larger the optimization tree has more branches and, consequently, the problem complexity increases. In particular, the number of optimization variables $v = \sum_{k=0}^{P-1} N_b|\mathcal{O}|^k$ grows exponentially as a function of $P$.  

\subsection{Partially Observable Navigation Problem}
We test the proposed strategy on the navigation task shown in Figures~\ref{fig:res_ex2_Scenario1}--\ref{fig:res_ex2_Scenario2}. 
In this example, there are two obstacles (black regions) and the objective is to reach a goal location that can only be inferred through partial observations. The observation model is piecewise and it is defined as follows:
\begin{equation}\label{eq:res_obsModel_1}
    Z(o = e, e, x)=\mathbb{P}(o = e| e, x) = \begin{cases}
    p_1 = 0.5  & \mbox{If } x \in \mathcal{X}_1 \\
    p_2 = 0.7 & \mbox{If } x \in \mathcal{X}_2 \\
    p_3 = 0.85 & \mbox{If } x \in \mathcal{X}_3 \\
    p_4 = 0.85 & \mbox{If } x \in \mathcal{X}_4 
    \end{cases},
\end{equation}
where regions $\mathcal{X}_1$, $\mathcal{X}_2$, $\mathcal{X}_3$, and $\mathcal{X}_4$ are depicted in Figures~\ref{fig:res_ex2_Scenario1}--\ref{fig:res_ex2_Scenario2}. Less formally, the observation function in~\eqref{eq:res_obsModel_1} models the accuracy of the sensors that are more accurate when the system is close to the candidate goal location and there is no occlusion caused by the obstacles. Indeed, observations collected in region $\mathcal{X}_1$ are not informative; on the other hand, in region $\mathcal{X}_2$ the probability that an observation is correct is $p_2 = 0.7$, and the most informative observations are collected in regions $\mathcal{X}_3$ and $\mathcal{X}_4$.
Finally, we consider the unstable point mass model~\eqref{eq:doubleIntegrator} and the cost function is defined by the following matrices $Q = 10^{-4}I_n, R = 10^{-2}I_d, \text{ and } Q_N = 10 I_n$
where $I_n \in \mathbb{R}^{n \times n}$ represents the identity matrix.

\begin{figure}[t!]
    \centering
	\includegraphics[width= 1.0\columnwidth,trim=30 10 30 10,clip]{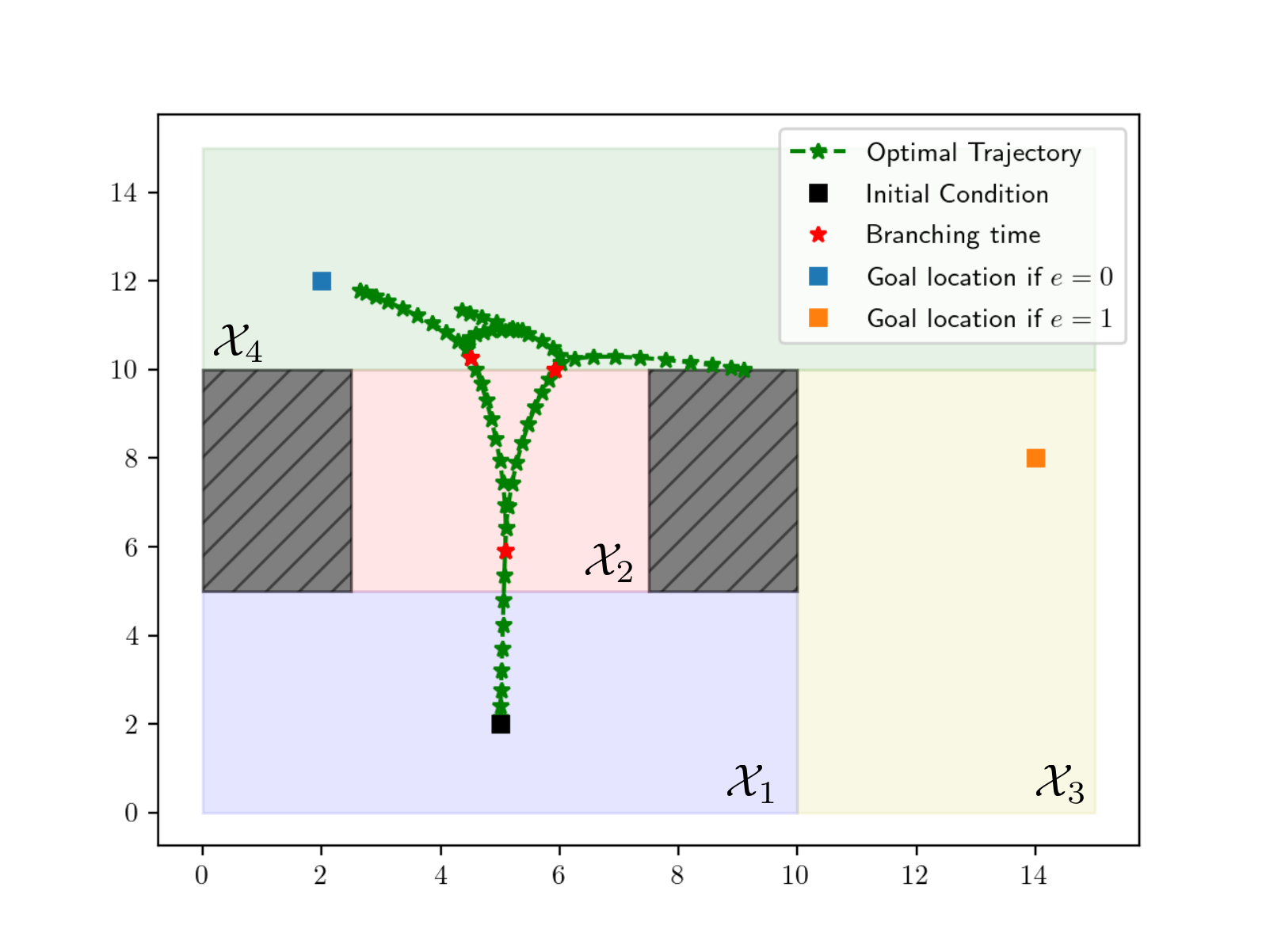}
    \vspace{-0.7cm}  
    \caption{Optimal trajectory computed solving the MICQ for $N=30$ and $N_b = 10$. The objective is to steer the system to the goal location that is a function of the partially observable state $e \in \{0, 1\}$, while avoiding the two obstacles (black rectangles). In this scenario, the initial belief $b(0) = [0.8, 0.2]^\top$ and the observation model is piecewise over the regions $\mathcal{X}_1$, $\mathcal{X}_2$, $\mathcal{X}_2$, and $\mathcal{X}_4$.}
    \label{fig:res_ex2_Scenario1}
            \vspace{-0.4cm}  
\end{figure}

\begin{figure}[t!]
    \centering
	\includegraphics[width= 1.0\columnwidth,trim=30 10 30 10,clip]{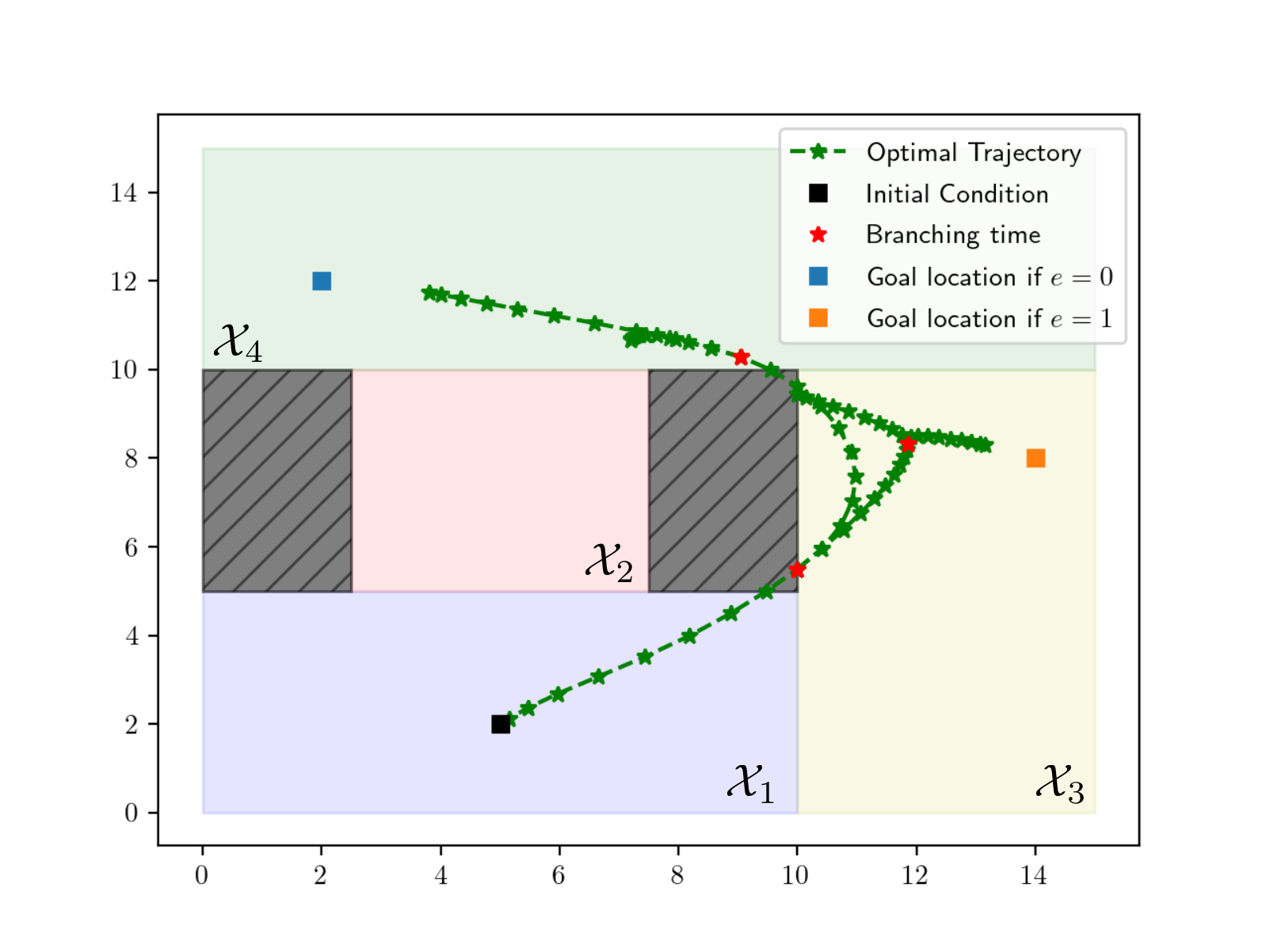}
    \vspace{-0.76cm}  
    \caption{Optimal trajectory computed solving the MICQ for $N=30$ and $N_b = 10$. The objective is to steer the system to the goal location that is a function of the partially observable state $e \in \{0, 1\}$, while avoiding the two obstacles (black rectangles). In this scenario, the initial belief $b(0) = [0.5, 0.5]^\top$ and the observation model is piecewise over the regions $\mathcal{X}_1$, $\mathcal{X}_2$, $\mathcal{X}_2$, and $\mathcal{X}_4$.}
    \label{fig:res_ex2_Scenario2}
        \vspace{-0.4cm}  
\end{figure}

We implemented the MICP using CVXPY~\cite{diamond2016cvxpy}. Notice that the feasible regions is non-convex as there are two obstacles in the environment. For this reason, at time $k$ we introduced integer variables to constraint the state of the system $x_k$  to lie in either $\mathcal{X}_1$, $\mathcal{X}_2$, $\mathcal{X}_3$, or $\mathcal{X}_4$. For implementation details please refer to the source code available at~\linkCode.

We tested the proposed strategy for two initial belief vectors. In both scenarios, we set a prediction horizon $N=30$ and the parameter $N_b=10$. Therefore, the optimal trajectory computed solving the MICP branches at time $t=10$ and time $t=20$. Figure~\ref{fig:res_ex2_Scenario1} shows the optimal trajectory tree when the initial belief $b(0) = [0.8, 0.2]^\top$. 
Notice that, as we have a strong belief that the environment state $e = 0$, the controller plans a trajectory tree that goes through region $\mathcal{X}_2$ to reach the goal location associated with the state $e=0$. 
% Although the observations collected in region $\mathcal{X}_2$ are less accurate compared to the ones from regions~$\mathcal{X}_3$ and $\mathcal{X}_3$, the optimizer plans a tree of trajectories that goes through region $\mathcal{X}_2$ to reach the goal location associated with the state $e=0$. 
On the other hand, when the initial belief $b(0) = [0.5, 0.5]^\top$, the optimizer plans a trajectory that collects observations only in regions $\mathcal{X}_3$ and $\mathcal{X}_4$, as shown in Figure~\ref{fig:res_ex2_Scenario2}. This result is expected as when we do not have any prior knowledge about the goal location--in this example $b(0)=[0.5, 0.5]^\top$--an optimal strategy should maximize the number of informative observations that are collected in regions $\mathcal{X}_3$ and~$\mathcal{X}_4$.

% \vspace{-0.2cm}

\section{Conclusions}
In this work, we introduced the mixed-observable constrained linear quadratic regulator problem, where the goal of the controller is to steer the system to a goal location that may be inferred only through partial observations. We showed that when the system's state space is continuous and the environment's state is discrete, the control problem can be reformulated as a finite-dimensional optimization problem over a trajectory tree. Leveraging this result, we showed that under mild assumptions the control problem can be recast as an MICP through a nonlinear change of coordinates. 

\bibliographystyle{IEEEtran} 
\bibliography{IEEEabrv,mybibfile}

\end{document}